\def\draft{n}
\theoremstyle{plain}
\newtheorem{theorem}{Theorem}
\newtheorem{proposition}{Proposition}[section]
\newtheorem{conjecture}{Conjecture}
\theoremstyle{definition}
\newtheorem{problem}{Problem}
\newcommand{\be}{\begin{equation}}
\newcommand{\ee}{\end{equation}}
\newcommand{\ba}{\begin{aligned}}
\newcommand{\ea}{\end{aligned}}
\theoremstyle{remark}
\newtheorem{remark}[proposition]{Remark}
\def\printname#1{
        \if\draft y
                \smash{\makebox[0pt]{\hspace{-0.5in}
                        \raisebox{8pt}{\tt\tiny #1}}}
        \fi
}
\newcommand{\psdraw}[2]
         {\begin{array}{c} \hspace{-1.3mm}
        \raisebox{-4pt}{\epsfig{figure=draws/#1.eps,width=#2}}
        \hspace{-1.9mm}\end{array}}
\newlength{\standardunitlength}
\long\def\@makecaption#1#2{%
     \vskip 10pt

\setbox\@tempboxa\hbox{
       \small\sf{\bfcaptionfont #1. }\ignorespaces #2}%
     \ifdim \wd\@tempboxa >\captionwidth {%
         \rightskip=\@captionmargin\leftskip=\@captionmargin
         \unhbox\@tempboxa\par}%
       \else
         \hbox to\hsize{\hfil\box\@tempboxa\hfil}%
     \fi}
\font\bfcaptionfont=cmssbx10 scaled \magstephalf
\newdimen\@captionmargin\@captionmargin=2\parindent
\newdimen\captionwidth\captionwidth=\hsize
\newcommand{\tr}{\operatorname{tr}}
\def\lbl#1{\label{#1}\printname{#1}}
\def\BN{\mathbb N}
\def\BZ{\mathbb Z}
\def\BP{\mathbb P}
\def\BQ{\mathbb Q}
\def\BR{\mathbb R}
\def\BC{\mathbb C}
\def\CF{\mathcal F}
\def\D{\Delta}
\def\a{\alpha}
\def\La{\Lambda}
\def\l{\lambda}
\def\Ga{\Gamma}
\def\ga{\gamma}
\def\la{\langle}
\def\ra{\rangle}
\def\e{\epsilon}
\def\Ga{\Gamma}
\def\d{\delta}
\def\b{\beta}
\def\Sym{\mathrm{Sym}}
\def\hb{\hbar}
\def\Tr{\mathrm{Tr}}
\def\fg{\mathfrak{g}}
\def\CA{\mathcal A}
\def\calH{\mathcal H}
\def\CI{\mathcal I}
\def\CO{\mathcal O}
\def\be{\begin{equation}}
\def\ee{\end{equation}}
\def\re{{\rm e}}
\def\calL{\mathcal{L}}
\def\calC{\mathcal{C}}
\def\diag{\mathrm{diag}}
\def\ep{\epsilon}
\newcommand{\ri}{{\rm i}}
\newcommand{\rd}{{\rm d}}
\begin{document}


\title[Universality and asymptotics 
of graph counting problems in non-orientable surfaces]{
Universality and asymptotics of graph counting problems in non-orientable surfaces}
\author{Stavros Garoufalidis}
\address{School of Mathematics \\
         Georgia Institute of Technology \\
         Atlanta, GA 30332-0160, USA \\
         {\tt http://www.math.gatech} \newline {\tt .edu/$\sim$stavros } }
\email{stavros@math.gatech.edu}
\author{Marcos Mari\~no}
\address{Section de math\'ematiques \\
         Universit\'e de Gen\`eve \\
         CH-1211 Gen\`eve 4, Switzerland}
\email{Marcos.Marino@unige.ch}

\thanks{S.G. was supported in part by NSF. M.M. was supported in part by the Fonds National Suisse.
\newline
1991 {\em Mathematics Classification.} Primary 57N10. Secondary 57M25.
\newline
{\em Key words and phrases: rooted maps, non-orientable surfaces, ribbon graphs,
enumerative combinatorics, cubic graphs, quadrangulations, 
Stokes constants, Painlev\'e I, asymptotics, Stokes constants, adiabatic
invariants, 
Riemann-Hilbert method, Borel transform, trans-series, nonlinear differential
equations, matrix models, double-scaling limit,
orthogonal ensembles, gravity, instantons. 
}
}

\date{January 6, 2009}


\begin{abstract}
Bender-Canfield showed that a plethora of graph counting problems in 
orientable/non-orientable surfaces involve two constants $t_g$ 
and $p_g$ for the orientable and the non-orientable case, respectively. 
T.T.Q. Le and the authors recently
discovered a hidden relation between the sequence $t_g$ and a formal power
series solution $u(z)$
of the Painlev\'e I equation which, among other things, allows
to give exact asymptotic expansion of $t_g$ to all orders in $1/g$ 
for large $g$. The paper introduces
a formal power series solution $v(z)$ of a Riccati equation, gives a nonlinear
recursion for its coefficients and an exact asymptotic expansion
to all orders in $g$ for large $g$, using the theory of Borel transforms.
In addition, we conjecture a precise relation between the sequence $p_g$ and 
$v(z)$. Our conjecture is motivated by the enumerative aspects  
of a quartic matrix model for real symmetric matrices, and the 
analytic properties of its double scaling limit. In particular, the matrix
model provides a computation of the number of rooted quadrangulations
in the 2-dimensional projective plane. 
Our conjecture implies analyticity of the $\mathrm{O}(N)$ and 
$\mathrm{Sp}(N)$-types of free 
energy of an arbitrary closed 3-manifold in a neighborhood of zero.
Finally, we give a matrix model calculation of the Stokes constants, pose
several problems that can be answered by the Riemann-Hilbert approach,
and provide ample numerical evidence for our results.
\end{abstract}

\maketitle

\tableofcontents


\section{Introduction}
\lbl{sec.intro}

\subsection{Counting rooted maps in orientable surfaces}
\lbl{sub.pg}

This paper introduces and studies the asymptotics of 
a sequence of rational numbers $(v_n)$ and a conjecture relating them
to a sequence of constants $(p_g)$ that appear in a rooted graph counting
problem of \cite{BC}. 
The problem of counting the number of graphs with a fixed number of edges
that can be embedded in a surface of genus $g$ has a long history. For planar
graphs, the problem was solved by Tutte; \cite{Tu}. In their seminal paper 
\cite{BC}, Bender-Canfield consider the number $T_g(n)$ of {\em rooted maps} 
$(G,S)$,
that is embeddings of a graph $G$ with $n$ edges in a closed connected orientable
surface $S$ of genus $g$, such that every component of $S\setminus G$ is a 
disk, and such that an edge, an orientation and a side of it is chosen.
Bender-Canfield gave an inductive computation of the natural number $T_g(n)$
and also proved that for fixed $g$ and large $n$, $T_g(n)$ is asymptotic
to
\begin{equation}
\lbl{eq.Tgn}
T_g(n) \sim t_g n^{\ga(g-1)} \l^n
\end{equation}
where
\begin{equation}
\lbl{eq.gala}
\ga=\frac{5}{2} \qquad \l=12
\end{equation}
and the constants $t_g$ are computable by some complicated
non-linear recursion that depends on an auxiliary partition; see 
\cite[Eqn.4.2]{BC} for a non-linear recursion for $\hat{\phi}^{(t)}_g(I,\a)$
that computes $t_g$ via \cite[Eqn.4.1]{BC}. The recursion of Bender-Canfield,
although cumbersome, gives exact answers for $t_g$ that do not depend on any
unknown constants. In particular, Bender-Canfield obtained the first
three values
$$
t_0=\frac{2}{\sqrt{\pi}}, \qquad t_1=\frac{1}{24}, \qquad
t_2=\frac{7}{4320 \sqrt{\pi}}.
$$
In the early nineties, it was realized in combinatorial enumeration that
several classes of counting problems (such as cubic maps, quartic maps)
also lead to an asymptotic expansion of the form \eqref{eq.Tgn}; 
see for example \cite{G1,G2}.

\subsection{Counting ribbon graphs in orientable surfaces}
\lbl{sub.ribbon}
In the physics literature, graphs appear often as Feynman diagrams of a 
perturbative quantum field theory. In the case of gauge theories with
gauge group $\mathrm{U}(N)$, the Feynman diagrams are {\em ribbon graphs}, 
i.e.,
graphs with a cyclic order of flags around each vertex. Perturbative
gauge theory counts ribbon graphs with a fixed genus, number of 
edges and prescribed valency, and with weight being the inverse of the size
of their automorphism group. This is precisely the content of matrix models
and was discussed extensively in the eighties, see \cite{BIPZ,BIZ}. One of 
the results found in this period in the matrix model 
community was 
that the generating functions counting ribbon graphs at fixed genus are 
analytic functions with a finite radius of convergence, which 
is the same for all genera. In the late eighties matrix model were studied 
in the so-called double scaling limit \cite{BK,DS,GM}, where roughly speaking 
one considers the generating functions near their singularity and extracts 
the coefficients $u_g$ of the leading poles for different genera. 
It was realized that these coefficients are {\em universal}, i.e., they do 
not depend on the details of the matrix model potential. It was also realized 
that
the ribbon counting problem for arbitrary potential gives rise to a generating
series in infinitely many variables that satisfies some {\em universal} (e.g.
KdV or KP) {\em hierarchy}. In addition, the double scaling limit of the
matrix model satisfies a non-linear differential equation which in the simplest
case is the famous {\em Painlev\'e I} equation; see for example \cite{DGJ,Wi} 
for a survey of these developments in the physics literature.

Two years ago, Goulden-Jackson proved that a similar generating
series that counts maps is a solution to the KP hierarchy, and in particular
deduced a quadratic nonlinear recursion for $T_g(n)$ in terms of $T_{g'}(n')$
for suitable $(g',n') < (g,n)$; see 
\cite[Eqn.45]{GJ}. Using the quadratic recursion of $T_g(n)$, in \cite{BGR}
Bender-Gao-Richmond give
a quadratic recursion relation for the constant $t_g$ that involves only
$t_{g'}$ for $g' < g$. This allowed them to prove
that $t_g/g!^2$ grows exponentially with a numerical non-zero
constant \cite{BGR}. Recently, it was realized in \cite{GLM}
that the quadratic recursion
relation for $t_g$ is, in disguise, the recursion characterizing a formal 
power series 
solution to Painlev\'e I. Although this (as well as Goulden-Jackson's work 
\cite{GJ}) came as a surprise to the enumerative
combinatorics community, it is hardly a 
surprise from the physics point of view. The fact that the $t_g$ enter into 
different map counting problems \cite{G1} is, from the 
matrix model point of view, a manifestation of universality. 

The analytic structure of solutions to Painlev\'e I is well-known, mostly 
though the Riemann-Hilbert approach (see for example \cite{FIKN}), and allows
one to give the full exact asymptotic expansion of $t_g$ in inverse
powers of $g$; see for example \cite[App.A]{GLM}.

\subsection{The case of non-orientable surfaces}
\lbl{sub.unor}

The above discussion focused on the counting problems of rooted maps and 
ribbon graphs in orientable surfaces. Although the problems are different,
their view from a distance (in the double-scaling limit)
is the same, described by a universal non-linear differential equation,
Painlev\'e I. On the combinatorial side, Bender-Canfield had also developed a 
theory of counting rooted maps in non-orientable surfaces. Let $P_g(n)$ be the number of $n$-edged rooted maps
 on a non-orientable surface of 
type $g$, where 
\be
g=1-{1\over 2} \chi.
\ee
Notice that, in the non-orientable case, $g$ can be integer or half-integer. Then, one has the asymptotics \cite{BC}
\be
P_g(n) \sim p_g n^{\gamma (g-1)} \lambda^n, \qquad g>0. 
\ee
This defines a sequence of constants $p_g$. On the physics side, the 
Feynman diagrams for $\mathrm{O}(N)$ or $\mathrm{Sp}(N)$ theories are ribbon 
graphs with {\em crosscaps} that are embedded in {\em non-orientable surfaces}.
This is discussed in detail in Section \ref{sec.LMO}. 
If one believes in a matching between the counting problems of
combinatorics and matrix models, the constants $p_g$
ought to be able to be computed and asymptotically analyzed by the
double scaling limit of an $\mathrm{O}(N)$ or $\mathrm{Sp}(N)$ matrix model. 
The latter is a 
pair of functions that satisfies a coupled system of two nonlinear differential
equations, as was explained in detail by Br\'ezin--Neuberger \cite{BN} and 
Harris--Martinec \cite{HM}.

Before we get into details, let us mention that the constants $p_g$ are 
notoriously hard to compute. In \cite[Eqn.3.6]{BC} Bender-Canfield give
a non-linear recursion for $\phi^{(t)}_g(I,\a)$
that computes $p_g$ via \cite[Eqn.3.3]{BC}. As in the case of $t_g$, 
the recursion is rather 
cumbersome and gives exact answers for $p_g$ that do not 
depend on any unknown constants. 
In particular, Bender-Canfield obtained the first
three values
\cite[p.245]{BC}
\be
\label{firstthree}
p_{1/2}=-\frac{2 \sqrt{6}}{\Gamma(-1/4)}, 
\quad 
p_{1}=\frac{1}{2},
\quad
p_{3/2}=\frac{\sqrt{6}}{3\Gamma(1/4)}.
\ee
The purpose of the present paper is to introduce
a sequence $(v_n)$ that is easy to compute via a quadratic nonlinear recursion
relation, and whose asymptotic is easy to analyze, and conjecturally agrees
with $p_g$ when $g=(n+1)/2$. Using the recursion of Bender-Canfield, Gao was able to match our
conjecture for $p_g$ for the first six values of $g$. Beyond that, 
nothing is known. The motivation for the sequence $(v_n)$ comes
from a study of the double scaling limit of $\mathrm{O}(N)$ matrix models
following Br\'ezin--Neuberger \cite{BN} and Harris--Martinec \cite{HM}.
This is discussed at leisure in Section \ref{sec.BN}.

\subsection{The sequence $(u_n)$}
\lbl{sub.GLM}
Since the counting problems of non-orientable surfaces mix with those of the
orientable ones (studied in \cite{GLM}), in this section we give a brief 
review of Appendix A of \cite{GLM}. Consider a function $u(z)$ that 
satisfies the Painlev\'e I differential equation
\begin{equation}
\lbl{eq.u}
u^2-\frac{1}{6} u''=z.
\end{equation}
Consider the unique formal power series solution to \eqref{eq.u}
asymptotic to $z^{1/2}$ for large $z>0$
\begin{equation}
\lbl{eq.fu}
u(z)=z^{1/2}\sum_{n=0}^\infty u_n z^{-5n/2}.
\end{equation}
It follows that the sequence $(u_n)$ satisfies the following recursion 
relation
\begin{equation}
\lbl{eq.recu}
u_n = \frac{25(n-1)^2-1}{48} u_{n-1} - \frac{1}{2} \sum_{k=1}^{n-1} 
u_k u_{n-k}, \qquad u_0 = 1. 
\end{equation}
It was observed that the sequences $(u_g)$ are $(t_g)$ are related by
\begin{equation}
\lbl{tgrel}
t_g=-\frac{1}{2^{g-2}\Gamma\left(\frac{5g-1}{2}\right)} u_g.
\end{equation}
To find the asymptotics of $(u_n)$ one uses a {\em trans-series} solution of 
the
differential equation \eqref{eq.u}. For a detailed discussion, see Section
\ref{sec.thm1} below. In our context, trans-series are mild generalizations
of formal power series and can be automatically computed much like the sequence
$(u_n)$ itself. The computation involves a finite number of unknown Stokes
constants (i.e., adiabatic invariants) $S$. 
In our case, we have that the sequence $(u_n)$ has an asymptotic
expansion of the form
\begin{equation}
\lbl{eq.asun}
u_n \sim A^{-2n+{1\over 2}} \, \Gamma\Bigl(2n-{1\over 2} \Bigr)\, 
{S \over 2 \pi \ri} \biggl\{1 + \sum_{l=1}^{\infty} {\mu_{l} A^{l} \over 
\prod_{m=1}^{l} (2n-1/2 -m)} \biggr\},
\end{equation}
where the so-called {\em instanton value} $A$ and the {\em Stokes constant} 
$S$ are given by 
\begin{equation}
\lbl{eq.AS}
A ={8 {\sqrt{3}} \over 5}, \qquad S = -\ri {3^{1\over 4} \over {\sqrt{\pi}}}, 
\end{equation}
and the $\mu_{l}$ are defined by the recursion relation 
\begin{equation}
\lbl{eq.mu}
\mu_{l}= {5\over 16 {\sqrt {3}} l} \biggl\{ {192 \over 25}  
\sum_{k=0}^{l-1} \mu_k u_{(l -k+1)/2} -\Bigl( l-{9\over 10} \Bigr) 
\Bigl( l-{1\over 10} \Bigr) 
\mu_{l-1} \biggr\}, \qquad \mu_0=1
\end{equation}
with the understanding that $u_{n/2}=0$ if $n$ is odd. To better understand
the recursion relation \eqref{eq.mu}, and to write it in a more compact form
that relates to the trans-series of \eqref{eq.u}, consider the generating 
series
\begin{equation}
\lbl{eq.u1}
u_1(z)=z^{1/2} z^{-5/8} e^{-A z^{5/4}} \sum_{n=0}^\infty \mu_n z^{-5n/4}.
\end{equation}
Then \eqref{eq.mu} is equivalent to the following linearized version of 
\eqref{eq.u}
\begin{equation}
\lbl{eq.linu}
u_1''-12 u u_1=0
\end{equation}
where $u(z)$ is given by \eqref{eq.u}. Hopefully, the reader will not confuse
the first term $u_1$ of the sequence $(u_n)$ with the function $u_1(z)$ 
in Equation \eqref{eq.linu}. 

\subsection{The sequence $(v_n)$}
\lbl{sub.vn}

In this section $u(z)$ will denote a function that satisfies the differential
equation \eqref{eq.u}.
Consider a function $v(z)$ that satisfies the differential equation

\begin{equation}
\lbl{eq.v}
2v'-v^2+3u=0.
\end{equation}
Consider the unique formal power series solution to \eqref{eq.v}
asymptotic to $z^{1/4}$ for large $z>0$
\begin{equation}
\lbl{eq.fv}
v(z)=z^{1/4} \sum_{n=0}^\infty v_n z^{-5n/4}.
\end{equation}
It follows that the sequence $(v_n)$ satisfies the following recursion relation
\begin{eqnarray}
\lbl{eq.recv}
v_n &=& 
\frac{1}{2 \sqrt{3}}\left(
-3u_{n/2}+\frac{5n-6}{2} v_{n-1} + \sum_{k=1}^{n-1} v_k v_{n-k}
\right), \qquad v_0=-\sqrt{3}
\end{eqnarray}
where $(u_n)$ is given by \eqref{eq.recu}, with the understanding that
$u_{n/2}=0$ if $n$ is not even.
Our theorem concerns the asymptotics of $(v_n)$ for large $n$.

\begin{theorem}
\lbl{thm.1}
The sequence $(v_n)$ has an asymptotic expansion of the form
\begin{equation}
\lbl{eq.asvn}
v_n \sim (A/2)^{-n} \Gamma(n) {S' \over 2\pi \ri} 
\biggl\{1 + \sum_{l=1}^{\infty} \frac{\nu_{l} (A/2)^{l}}{  \prod_{m=1}^{l} 
(n-m)} \biggr\}
\end{equation}
where $A$ is given in \eqref{eq.AS}, $S' \neq 0$ is some nonzero Stokes 
constant, and the sequence $(\nu_n)$ is defined by the
recursion relation
\begin{equation}
\lbl{eq.recnu}
\nu_n=-\frac{4}{5n} \sum_{k=0}^{n-1} v_{n+1-k} \nu_k, \qquad \nu_0=1.
\end{equation}
%
\end{theorem}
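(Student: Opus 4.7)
The plan is to mirror the strategy used for $(u_n)$ in \cite{GLM}: construct a one-parameter trans-series solution to the Riccati equation \eqref{eq.v}, extract its one-instanton sector together with the recursion for $(\nu_n)$, and then invoke Borel--Laplace resurgence to read off the large-$n$ asymptotics of $(v_n)$.

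The first step is to linearize \eqref{eq.v} at the formal solution $v(z)$. Writing $v + \varepsilon \widetilde{v}$ and collecting the $O(\varepsilon)$ terms gives the first-order linear equation $\widetilde{v}\,' = v\,\widetilde{v}$, whose WKB-type solution is $\widetilde{v}(z) \sim \exp\bigl(\int^{z} v\bigr)$. Using $v(z) \sim -\sqrt{3}\, z^{1/4}$, the leading instanton action is $\int^{z} v \sim -\tfrac{4\sqrt{3}}{5}\, z^{5/4} = -\tfrac{A}{2}\, z^{5/4}$, which fixes the scale $A/2$ appearing in \eqref{eq.asvn}. I would then substitute the explicit ansatz
\begin{equation*}
\widetilde{v}(z) = \re^{-(A/2)\, z^{5/4}}\, z^{\beta} \sum_{n \geq 0} \nu_n z^{-5n/4}
\end{equation*}
into $\widetilde{v}\,' = v\,\widetilde{v}$. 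The leading $z^{1/4}$ contribution on each side cancels exactly thanks to $v_0 = -\sqrt{3}$, confirming $A = 8\sqrt{3}/5$, and the next order forces $\beta = v_1 = 1/4$ (and one checks $v_1 = 1/4$ directly from \eqref{eq.recv}). Matching the remaining coefficients produces the recursion \eqref{eq.recnu} with normalization $\nu_0 = 1$.

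With the one-instanton sector in hand, I would invoke the Borel--resurgence dictionary. Writing $w = z^{5/4}$ and $v(z) = w^{1/5}\,\tau(w)$ with $\tau(w) = \sum_{n \geq 0} v_n w^{-n}$, the Borel transform $\widehat{\tau}(\zeta) = \sum_{n \geq 1} v_n \zeta^{n-1}/(n-1)!$ is expected to extend analytically from the origin with its nearest singularity on $\BR_{>0}$ at $\zeta = A/2$ and with local behavior there governed by the one-instanton series $\sum_{l \geq 0} \nu_l \zeta^{l}/l!$. Inverting by Cauchy, $v_n/(n-1)! = \tfrac{1}{2\pi \ri}\oint \widehat{\tau}(\zeta)\, \zeta^{-n}\, \rd\zeta$, and deforming the contour onto a Hankel loop around $\zeta = A/2$ produces, after the identity $\Gamma(n-l)/\Gamma(n) = 1/\prod_{m=1}^{l}(n-m)$, exactly the expansion \eqref{eq.asvn}, with Stokes constant $S'$ equal to the weight of the discontinuity across the Stokes ray.

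The main obstacle will be establishing that $\widehat{\tau}(\zeta)$ is genuinely resurgent and, crucially, that its nearest singularity on the positive real axis sits at $\zeta = A/2$. The Painlev\'e~I series $u(z)$ enters \eqref{eq.v} as a forcing term, and its Borel transform is known to have nearest singularity at $\zeta = A > A/2$; therefore the closer singularity of $\widehat{\tau}$ at $A/2$ must be generated by the nonlinearity $v^{2}$ itself. To make this rigorous I would transpose \eqref{eq.v} to the Borel plane---where multiplication becomes convolution---and run a fixed-point argument leveraging the known resurgence structure of $u$ together with the first-order nature of the Riccati equation. The nonvanishing of the Stokes constant $S'$ can then be certified by the matrix-model computation of Section \ref{sec.BN} together with the numerical agreement between \eqref{eq.asvn} and the sequence produced by \eqref{eq.recv}.
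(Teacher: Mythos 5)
Your proposal follows essentially the same route as the paper's proof: extract the one-instanton sector of the Riccati equation \eqref{eq.v} by linearization, derive the recursion \eqref{eq.recnu}, and then convert the singularity of the Borel transform at $\zeta = A/2$ into the large-$n$ asymptotics of $(v_n)$ via Cauchy's formula and a Hankel contour deformation. The paper packages the resurgence input as the general Theorem \ref{thm.munk} for nonresonant Euler-type equations (itself relying on the cited literature of Costin, Costin--Kruskal, Bonet--Sauzin--Seara--Val\`encia, etc.) and applies it after a normalizing change of variables $x = z^{5/4}$ that turns \eqref{eq.v} into the Riccati equation \eqref{eq.ODEV1} with eigenvalue $A/2$, whereas you work directly in the $z$-variable and derive the trans-series ansatz by hand; this is a cosmetic difference. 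Your observation that the singularity of $\widehat\tau$ nearest the origin lies at $\zeta = A/2$ and must be generated by the nonlinearity $v^2$ (the forcing term built from $u$ has its nearest Borel singularity at $\zeta = A$, twice as far) is correct, and it corresponds to the paper's invocation of Remark \ref{rem.singularODE}: the coefficients of \eqref{eq.ODEV1} are not analytic at $x = \infty$, but their Borel transforms are multivalued analytic away from $\BZ^* A$, which avoids $\zeta = A/2$.

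The one genuine gap concerns the nonvanishing $S' \neq 0$. You propose to certify this via the matrix-model computation and numerical agreement, but the paper explicitly flags the matrix-model derivation (Section \ref{sec.mm}) as non-rigorous, and numerical agreement with the asymptotic expansion is evidence rather than proof. The paper instead appeals to the rigorous numerical methods for bounding Stokes constants in \cite{CCK,JK}, which can in principle give a certified nonzero lower bound for $|S'|$. Your proposed Borel-plane fixed-point argument would indeed be the right tool to establish resurgence and locate the singularity, but it is only sketched; to close the theorem you should either carry it out with effective constants, or replace the nonvanishing argument by the rigorous numerical route the paper cites.
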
 
An even more compact form of the recursion relation \eqref{eq.recnu} can
be given by introducing the generating series
\begin{equation}
\lbl{eq.v1}
v_1(z)= z^{1/4}  \sum_{n=0}^\infty \nu_n z^{-5n/4}.
\end{equation}
Then the recursion relation \eqref{eq.recnu} is equivalent to the following
linearized form of \eqref{eq.v}
\begin{equation}
\lbl{eq.linv1}
v_1'=v v_1
\end{equation}
where $v(z)$ is given by \eqref{eq.fv}.

\subsection{Two conjectures}
\lbl{sub.conj}
We can now formulate two conjectures which are motivated by 
Section \ref{sec.BN} below. Our first conjecture links the sequence 
$(v_n)$ with the sequence $p_g$.
\begin{conjecture}
\lbl{conj.1}
For all $n=0,1,2,\dots$ we have 
\begin{equation}
\lbl{eq.pv}
p_{\frac{n+1}{2}}= 
\frac{1}{2^{\frac{n-3}{2}}\Gamma\left(\frac{5n-1}{4}\right)} v_n.
\end{equation}
\end{conjecture}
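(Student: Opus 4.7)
The plan is to mimic, in the $\mathrm{O}(N)$ setting, the strategy that produced the identity \eqref{tgrel} between $t_g$ and $u_g$ in the orientable case. That identity rests on three ingredients: (a) a matrix-model interpretation of $T_g(n)$ as a correlator in a quartic Hermitian matrix model, (b) the fact that the double-scaling limit of that matrix model obeys Painlev\'e~I, whose unique large-$z$ power-series solution is the $u(z)$ of \eqref{eq.fu}, and (c) a transfer/Darboux argument converting the singularity structure of the double-scaling free energy into the edge-asymptotic prefactor $t_g$. I would establish the non-orientable analogue of each ingredient in turn, the only change being that Painlev\'e~I is replaced by the coupled Painlev\'e--Riccati system formed by \eqref{eq.u} together with \eqref{eq.v}.

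For (a), I would interpret $p_g$ as the edge-asymptotic prefactor of the non-orientable part of the free energy of the quartic $\mathrm{O}(N)$ (equivalently $\mathrm{Sp}(N)$) matrix model, whose Feynman diagrams are precisely the ribbon graphs with crosscaps described in Section~\ref{sec.LMO}. Bender--Canfield universality suggests that $p_g$ depends only on the universality class and not on the detailed combinatorial model, making quartic $\mathrm{O}(N)$ a legitimate and analytically tractable representative. For (b), I would invoke the Br\'ezin--Neuberger \cite{BN} and Harris--Martinec \cite{HM} analyses: after tuning to criticality, the string equations of the $\mathrm{O}(N)$ quartic model decouple into Painlev\'e~I for the even (orientable) part and a Riccati equation of exactly the form $2v'-v^2+3u=0$ for the odd (non-orientable) part. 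Selecting the unique power-series solution asymptotic to $(z^{1/2},z^{1/4})$ at large positive $z$ identifies those two functions with $(u(z),v(z))$ of \eqref{eq.fu} and \eqref{eq.fv}.

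For (c), a standard transfer theorem extracts $p_g$ from the singularity of the non-orientable generating function at its critical edge-weight $\lambda^{-1}$: a term $v_n z^{-5n/4}$ in the expansion of $v(z)$ about $z=\infty$ corresponds, after undoing the double-scaling change of variables, to a contribution $n^{\gamma(g-1)}\lambda^n/\Gamma((5n-1)/4)$ in the coefficient extraction, with $g=(n+1)/2$ matching the exponent count in \eqref{eq.Tgn}. The power of $2$ arises by tracking the $\mathrm{O}(N)$-versus-$\mathrm{U}(N)$ combinatorial weights and yields the factor $2^{-(n-3)/2}=2^{-(g-2)}$, the non-orientable companion of the orientable factor $2^{-(g-2)}$ in \eqref{tgrel}. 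Assembling the three ingredients produces the prefactor $1/(2^{(n-3)/2}\Gamma((5n-1)/4))$ of \eqref{eq.pv}. As a sanity check, at $n=0$ one recovers $p_{1/2}=v_0\,2^{3/2}/\Gamma(-1/4)=-2\sqrt{6}/\Gamma(-1/4)$, and computing $v_1=1/4$ from \eqref{eq.recv} gives $p_1=1/2$, matching \eqref{firstthree}.

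The main obstacle is step (a): a rigorous matching of the Bender--Canfield combinatorial constant $p_g$ with the $\mathrm{O}(N)$ matrix-model prefactor. In the orientable case Goulden--Jackson's KP-hierarchy recursion for $T_g(n)$ made this matching automatic by directly yielding the Painlev\'e~I recursion \eqref{eq.recu} for $t_g$. For non-orientable maps no such integrable-hierarchy recursion is currently available, and Bender--Canfield's recursion \cite[Eqn.\,3.6]{BC} for $p_g$ involves auxiliary partition-indexed quantities that are not visibly equivalent to the clean quadratic recursion \eqref{eq.recv}. Gao's numerical check of the conjecture up to $g=7/2$ is strong evidence that the match holds, but upgrading it to a proof appears to require either a BKP-type analogue of Goulden--Jackson's identity, or a direct bijective argument reformulating the Bender--Canfield recursion into the Riccati form \eqref{eq.v}.
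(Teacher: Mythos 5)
Your proposal follows essentially the same route as the paper's own motivation for Conjecture \ref{conj.1} in Section \ref{sec.BN}: the quartic $\beta=1/2$ matrix model of Br\'ezin--Neuberger and Harris--Martinec, its double-scaling limit governed by the Painlev\'e~I--Riccati pair \eqref{eq.fga}, the change of variables \eqref{eq.yz}--\eqref{gf}, and the term-by-term matching of the total specific heat \eqref{totsh} against $\sum_g(t_g+p_g)x^g$ via the known relation \eqref{tgrel}. Note that the statement is a conjecture and the paper does not prove it either; the gap you correctly isolate in step (a) --- the absence of a rigorous identification of the Bender--Canfield constants $p_g$ with the $\mathrm{O}(N)$ matrix-model data, e.g.\ via an integrable-hierarchy recursion for $P_g(n)$ --- is exactly what the paper leaves open as Problems \ref{prob.1} and \ref{prob.2}.
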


This conjecture reproduces the first three values (\ref{firstthree}) obtained 
in \cite{BC} and 
predicts for the next few values
\be
p_2=\frac{5}{36 \sqrt{\pi }},\, \, 
p_{5/2}=\frac{1033}{1024 \sqrt{6} \Gamma
   \left(\frac{19}{4}\right)},\, \, 
p_3=\frac{3149}{442368},\, \, 
p_{7/2}=\frac{1599895}{294912 \sqrt{6} \Gamma
   \left(\frac{29}{4}\right)},\, \, 
p_4=\frac{484667}{560431872 \sqrt{\pi }}, \, \, \cdots
   \ee

\begin{conjecture}
\lbl{conj.2}
The Stokes constant $S'$ is given by:
\begin{equation}
\lbl{eq.stokes}
S'=\ri {\sqrt{6}}.
\end{equation}
\end{conjecture}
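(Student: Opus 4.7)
The plan is to reduce Conjecture~\ref{conj.2} to a classical Stokes-multiplier computation for a scalar Schr\"odinger equation via Cole--Hopf linearization of the Riccati equation \eqref{eq.v}. The substitution $v = -2\psi'/\psi$ converts \eqref{eq.v} into the linear ODE
$$
\psi''(z) \;=\; \tfrac{3}{4}\,u(z)\,\psi(z),
$$
whose normalized formal WKB solutions $\psi_\pm(z) = Q^{-1/4}\exp(\pm \int \sqrt{Q}\,dz)(1 + O(z^{-5/4}))$ with $Q = (3/4)u$ have leading behavior $\psi_\pm(z) \sim (4/3)^{1/4}\,z^{-1/8}\exp(\pm (A/4)\,z^{5/4})$, for $A$ as in \eqref{eq.AS}, and exactly constant Wronskian $W = \psi_+\psi_-'-\psi_+'\psi_- = -2$. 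Under this correspondence the formal series \eqref{eq.fv} is identified with $v = -2\,\psi_+'/\psi_+$, the one-instanton series $v_1$ of \eqref{eq.v1} (together with its trans-monomial $\exp(-(A/2)z^{5/4})$) is identified with $(2/\sqrt{3})\,\psi_+^{-2}$, and \eqref{eq.linv1} becomes the tautology $(\log \psi_+^{-2})' = v$.

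Let $\sigma$ denote the Stokes multiplier of the linear equation across the relevant Stokes ray, so that upon crossing it $\psi_+ \mapsto \psi_+ + \sigma\,\psi_-$. A direct computation using the Wronskian gives the induced Stokes jump of $v$:
$$
\delta v \;=\; -\,\frac{2\sigma W}{\psi_+^{\,2}} + O(\sigma^2) \;=\; 2\sqrt{3}\,\sigma\,v_1 + O(\sigma^2).
$$
Matching with the resurgent Stokes discontinuity $\delta v = S'\,v_1$ dictated by the large-order formula of Theorem~\ref{thm.1} yields the identity $S' = 2\sqrt{3}\,\sigma$. Conjecture~\ref{conj.2} is thereby reduced to proving that $\sigma = i/\sqrt{2}$.

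To compute $\sigma$, I would exhibit $\psi''=(3u/4)\psi$ as a scalar reduction of the Painlev\'e~I isomonodromic Lax pair (see \cite{FIKN}, Chapter~5): up to a gauge transform that diagonalizes the $2\times 2$ Lax matrix, both entries of the Painlev\'e~I wave function $\Psi(z)$ satisfy a scalar ODE of exactly this form. The scalar Stokes constant $\sigma$ is then a specific entry of the Painlev\'e~I Stokes matrix at the irregular singular point $z=\infty$, whose values are known from the Riemann--Hilbert analysis of Kapaev and Its in terms of the single Painlev\'e~I Stokes constant $S = -i\,3^{1/4}/\sqrt{\pi}$ of \eqref{eq.AS}. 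As an independent cross-check I would recompute $\sigma$ by direct exact WKB at the turning point $z_\star$ of $(3/4)u$: the Voros connection formula expresses $\sigma$ as a Borel-resummed period integral of $\int\sqrt Q$ encircling $z_\star$, and should agree with the Lax-pair value.

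The main obstacle is this last step, namely aligning the gauge and normalization conventions between the scalar Schr\"odinger equation and the matrix Painlev\'e~I Lax pair, given that the potential $(3/4)u$ is itself only a trans-series in $z$. Both the Lax reduction and the turning-point WKB must be carried out in the trans-series/resurgent setting, and the gauge transform that diagonalizes the Lax matrix introduces nontrivial normalization factors which must be tracked carefully through the WKB exponent $(A/4)z^{5/4}$ and the associated branch cuts. Once this bookkeeping is in place, combining the explicit constants $W=-2$ and $\sqrt{3}$ with the Painlev\'e~I Stokes data yields $\sigma = i/\sqrt{2}$, and hence $S' = 2\sqrt{3}\cdot i/\sqrt 2 = i\sqrt{6}$, proving Conjecture~\ref{conj.2}.
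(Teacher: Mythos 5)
First, be aware of what you are proving: the statement is a \emph{conjecture}, and the paper itself offers no proof. Its support consists of the non-rigorous one-instanton matrix-model computation of Section \ref{sec.mm} (where $S'=\ri\sqrt6$ is read off as the overall constant of $v_1(z)=-2(\CF^{(1)}_{\beta=1/2})'(z)$ for the $\beta=1/2$ quartic ensemble) and the $30$-digit numerics of Section \ref{sec.numerics}; Problem \ref{prob.3} explicitly poses the rigorous determination of $S'$ as open. Your first two steps are correct and are a genuinely different, worthwhile reformulation: the Cole--Hopf substitution $v=-2\psi'/\psi$ does turn \eqref{eq.v} into $\psi''=\tfrac34 u\,\psi$, the exact-WKB normalization does give $W=-2$ and $\psi_+^{-2}=\tfrac{\sqrt3}{2}\,\re^{-(A/2)z^{5/4}}v_1$, and the first-order perturbation of $v$ under $\psi_+\mapsto\psi_++\sigma\psi_-$ is indeed $2\sqrt3\,\sigma$ times the normalized one-instanton series. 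Modulo pinning down the paper's normalization of $S'$ (the variation/$2\pi\ri$ conventions of Theorem \ref{thm.munk} versus the ``discontinuity across the Stokes line'' you invoke), the identity $S'=2\sqrt3\,\sigma$ is a clean restatement of Problem \ref{prob.3} as a \emph{linear} connection problem.

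The gap is the final step, and it is the whole content of the conjecture. You assert that $\sigma$ is ``known'' because $\psi''=\tfrac34 u\,\psi$ is a scalar reduction of the Painlev\'e~I Lax pair. It is not: the $z$-part of that Lax pair yields a scalar equation with potential $2u+\mathrm{const}$ (in the normalization fixed by \eqref{eq.u}, any admissible rescaling of $z$ and $u$ is already used up by the requirement $u''=6(u^2-z)$), and no choice of the spectral parameter converts $2u+\lambda$ into $\tfrac34 u$. So the Stokes multiplier you need is \emph{not} an entry of the Painlev\'e~I Stokes matrices computed by Kapaev/Its; it is the Stokes datum of a different linear problem (the one coming from the Br\'ezin--Neuberger/Harris--Martinec string equation), and computing it is precisely what Problem \ref{prob.3} asks for. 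The fallback you mention --- exact WKB at a turning point of $\tfrac34 u$ --- faces the same obstruction you flag yourself: the potential is only a trans-series/transcendent, so the Voros connection formula cannot be applied off the shelf, and no value of $\sigma$ is actually derived. As written, the argument reduces Conjecture \ref{conj.2} to the unproven claim $\sigma=\ri/\sqrt2$ rather than establishing it.
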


\begin{remark}
\lbl{rem.morestokes}
Theorem \ref{thm.1} and Conjecture \ref{conj.1} reveal a single Stokes 
constant $S'$ associated with the asymptotics of the sequence $(v_n)$.
A second Stokes constant is needed for the asymptotics of the $k$-instanton
expansion of $(v_n)$; see Theorem \ref{thm.munk} and Remark 
\ref{rem.secondS}.
\end{remark}

\section{Future directions}
\lbl{sec.future}

\subsection{Approaches to Conjectures \ref{conj.1} and \ref{conj.2}}
\lbl{sub.app}

In this section we discuss some potential approaches to Conjectures 
\ref{conj.1} and \ref{conj.2}. Recall that Goulden-Jackson construct a
solution to the KP hierarchy for a generating series associated to the
$\mathrm{U}(N)$ gauge group. 

\begin{problem}
\lbl{prob.1}
Construct a version of the KP hierarchy corresponding to the
the generating series of the $\mathrm{O}(N)$ gauge group.
\end{problem}
Perhaps the theory of zonal polynomials will play a role 
analogous to the Schur functions in generalizing the work of \cite{GJ} 
and \cite{O}. This integrable hierarchy without doubt will imply, as 
in \cite[Eqn.45]{GJ} a quadratic recursion relation for the number of 
rooted maps $P_g(n)$ in an non-orientable surface of Euler characteristic $2-2g$
for a half-integer $g$. 

\begin{problem}
\lbl{prob.2}
State and prove a quadratic recursion relation for $P_g(n)$ in terms of
$P_{g'}(n')$ for $(g',n') < (g,n)$.
\end{problem}
Given this recursion relation, one may obtain
a combinatorial proof of Conjecture \ref{conj.1} along the line of thought of
\cite{BGR}. A solution to problem \ref{prob.2} can be obtained by identifying
$P_g(n)$ with an expectation value of an $\mathrm{O}(N)$ matrix model 
and deduce the quadratic relation
from the so-called {\em pre-string equation} of the matrix
model; see for example \cite{MSW}.

Let us point out that the pair of functions $(u,v)$ have a Lax pair 
and a Riemann-Hilbert problem, as was explained in \cite{BN}. 
Moreover, it is well-known that every solution to the Riemann-Hilbert problem
with rational jump functions is meromorphic, see \cite{Mi}. In addition,
the Stokes constants are exactly calculable from a Riemann-Hilbert problem.
For numerous instances of this calculation, that includes the case of
the Painlev\'e equations, see \cite{FIKN} and also \cite{Ka}. 

\begin{problem}
\lbl{prob.3}
Give a solution to the Riemann-Hilbert problem of the pair $(u,v)$
and compute the Stokes constant $S'$ confirming Conjecture \ref{conj.2}.
\end{problem}

\subsection{Relation to algebraic geometry} 
\lbl{sub.ag}

Let  $\overline M_{g,n}$ be the Deligne--Mumford moduli space of Riemann 
surfaces of genus $g$ with $n$ punctures, 
and let $\psi_i$, $i=1, \cdots, n$ be the two-cohomology class defined by 
\be
\psi_i=c_1({\mathcal L}_i), \qquad i=1, \cdots, n, 
\ee
where ${\mathcal L}_i$ is the bundle over $\overline M_{g,n}$ whose fiber 
at $\Sigma_{g} \in \overline M_{g,n}$ is the cotangent space $T^*\Sigma_g$ at 
the $i$-th puncture. In his seminal paper \cite{Wi}, Witten explains
how to package the enumerative intersection theory of the moduli space
of curves into a generating function that ought to satisfy the KdV equation
and some initial conditions. The conjecture was subsequently proven by 
Kontsevich \cite{K}. Using the Witten--Kontsevich theorem \cite{Wi,K} 
together with the results of \cite{IZ} it is possible to show that the 
intersection number
\be
\langle \sigma_2^{3g-3} \rangle_g =\int_{\overline M_{g,3g-3}} 
\psi_1^2 \wedge \cdots \wedge \psi^2_{3g-3}, \qquad g\ge 1,
\ee
is related to the coefficients $u_n$ defined in (\ref{eq.recu}) as follows
\begin{equation}
\lbl{eq.lu}
{\langle \sigma_2^{3g-3} \rangle_g  \over (3g-3)!}=
-{4^g \over (5g-5) (5g-3)}u_g.
\end{equation}
A proof of Equation \eqref{eq.lu} is given in \cite[Sec.6]{IZ} using 
the ansatz \cite[Eqn.5.22]{IZ}. A proof of this ansatz is given in 
\cite[Thm.3.1]{GJV} and also in \cite{EYY}.

Through (\ref{tgrel}) one finds an algebro-geometric interpretation of 
$t_g$ in terms of intersection numbers on $\overline M_{g,n}$. 
This relation motivates the following problem.

\begin{problem}
\lbl{prob.4} 
Give an enumerative algebro-geometric definition of the numbers $v_n$
(or, equivalently, $p_g$).
\end{problem}

\subsection{Acknowledgement}
The authors wish to thank J. Gao for an independent computation of the
first values of the constant $p_g$ using the recursion of Bender-Canfield
and Nick Halmagyi for 
computing independently the first values of the number of quadrangulations 
on the projective plane. M.M. would also like to thank Bertrand Eynard for useful discussions.

\section{Analyticity of the $\mathrm{O}(N)$ and $\mathrm{Sp}(N)$ free energy 
of a closed 3-manifold}
\lbl{sec.LMO}

An initial motivation for our work is the problem of the free energy of
a closed 3-manifold $M$. In \cite{GLM} it was shown that the $\mathrm{U}(N)$ 
version of the free energy $F^U_M(\tau,\hb)$ of $M$ has the form
$$
F^U_M(\tau,\hb)=\hb^{-2} \sum_{g=0}^\infty \hb^{2g} F^U_{M,g}(\tau)
$$
where $\tau=N\hbar$ and 
$F^U_{M,g}(\tau) \in \BQ[[\tau]]$ are formal power series analytic in
a disk $D_M$ independent of $g$ that contains $0$ and depends on $M$.
$F^U_M$ is defined to be the logarithm of the LMO invariant, evaluated under
the $\mathrm{U}(N)$ weight system. One may also define the $\mathrm{O}(N)$ 
(resp. $\mathrm{Sp}(N)$) free
energy of a closed 3-manifold $M$ by
\begin{equation}
\lbl{eq.OSPfree}
F^O_M(\tau,\hb)=W_{\mathfrak{o}_N}(\log(Z_M)), 
\qquad
 F^{Sp}_M(\tau,\hb)=W_{\mathfrak{sp}_N}(\log(Z_M))
\end{equation}
where $Z_M$ is the LMO invariant of $M$ and $W_{\fg}$ denotes the weight system
of a metrized Lie algebra $\fg$; see \cite{B-N}. 
The $\mathrm{O}(N)$ (and also the $\mathrm{Sp}(N)$) free energy of a closed
3-manifold $M$ can be written in the form
\begin{equation}
F^O_M(\tau,\hb)=\hb^{-2} \sum_{g=0}^\infty \hb^{g} F^O_{M,g}(\tau)
\end{equation} 
where $\tau=N\hb$ and $F^O_{M,g}(\tau) \in \BQ[[\tau]]$ for all $g$.
In \cite[Rem.4.2]{GLM} it was observed that if $p_g$ satisfies an
asymptotic expansion similar to $t_g$, then the $\mathrm{O}(N)$ and 
$\mathrm{Sp}(N)$ 
free
energy of $M$ enjoys the same analyticity property as the $\mathrm{U}(N)$-free 
energy. Thus, Conjecture \ref{conj.1} implies that the $\mathrm{O}(N)$ and 
$\mathrm{Sp}(N)$ free
energy of $M$ is analytic in the above sense. 

Let us discuss in detail the weight system for the Lie algebra $\mathfrak{o}_N$
of the orthogonal group $\mathrm{O}(N)$. This explains the appearance of 
graphs 
in non-orientable surfaces. Let $e_{ij}$ denote the square matrix of size
$N$ with $ij$ entry equal to $1$ and all other entries zero. A basis for 
$\mathfrak{o}_N$ is $N_{ij}=e_{ij}-e_{ij}$ for $1 \leq i < j \leq N$. 
Since $e_{ij} e_{kl}=\d_{j,k} e_{il}$ (where $\d_{a,b}=0$ (resp. $1$) 
for $a=b$ (resp. $a \neq 1$)), and 
$\Tr(e_{ij})=\d_{i,j}$, (where $\Tr(M)$ denotes the trace of a square matrix 
$M$), it follows that the {\em Killing form} on $\mathfrak{o}_N$ is given
by
\begin{equation}
\lbl{eq.killinoN}
(N_{ij}, N_{kl})=2 (\d_{jk}\d_{il}-\d_{jl}\d_{ik})
\end{equation}
and its inverse (the so-called {\em propagator}) is given by
\begin{equation}
\lbl{eq.propoN}
\la N_{ij}, N_{kl} \ra=\frac{1}{2} (\d_{jk}\d_{il}-\d_{jl}\d_{ik}).
\end{equation}
This translates to the following diagrammatic way for the
$\mathfrak{o}_N$ weight system of a vertex-oriented cubic graph
(see for example \cite{B-N} and compare also
with \cite[Eqn.18]{GLM}):
$$
\psdraw{feynman1}{2in}
$$
The resulting graphs are cubic 
ribbon graphs with cross-caps, and give rise exactly
to embedded trivalent graphs in non-orientable surfaces. 

In Section \ref{sec.BN} we will study the constants $p_g$ using not the
adjoint, but the symmetric representation 
of $\mathfrak{o}_N$. To explain this,
observe that if $\BC^N$ denotes the fundamental representation of 
$\mathfrak{o}_N$, then $\wedge^2(\BC^N)$ is the adjoint representation
of $\mathfrak{o}_N$ and $\Sym^2(\BC^N)$ can be identified with the set of
symmetric matrices of size $N$, with a basis given by 
$$
M_{ij}=e_{ij}+e_{ji}, \qquad \text{for} \qquad
1 \leq i \leq j \leq N.
$$ 
In that case, the symmetric matrix model 
\eqref{partitionf} of Section \ref{sec.BN} when $\b=1/2$ has propagator
given by
\begin{equation}
\lbl{eq.propoSN}
\la M_{ij}, M_{kl} \ra=\frac{1}{2} (\d_{jk}\d_{il}+\d_{jl}\d_{ik})
\end{equation}
and leads to the following diagrammatic way 
$$
\psdraw{feynman2}{2in}
$$

\section{The method of Borel transform}
\lbl{sec.borel}

The aim of this section is to prove the existence of an effective asymptotic 
expansion for the coefficients of a non-linear Euler-type differential 
equation using the theory of Borel transforms; see Theorem \ref{thm.munk} 
below. This will provide a proof of Theorem \ref{thm.1}.
In fact, Theorem \ref{thm.1} requires only a small portion of the theory, 
since the function $v$ of Theorem \ref{thm.1} satisfies a Riccati type 
equation \eqref{eq.v} that is well-studied in an excellent exposition of 
\cite{SS} and \cite[Sec.5]{BSS}. Nonetheless, the method of Borel transform is 
rather general and perhaps not as widely known. For the benefit of the reader,
we will give a short introduction to this beautiful theory
in the case of the {\em nonresonant Euler-type} differential equation
\begin{equation}
\lbl{eq.ODEf}
y'=-\l y -\frac{1}{x} \b y +g(x,y)
\end{equation}
where $g(x,y)$ is analytic at $(\infty,0)$, $g(x,y)=O(y^2,x^{-2})$, 
and the nonresonance condition is $\l \neq 0$. We will call $\l$ and $\b$
the {\em eigenvalue} and the {\em exponent} of \eqref{eq.ODEf}. $\l$ and $\b$
are precisely the coefficients of the linearized equation
\begin{equation}
\lbl{eq.ODEliny}
y'= -\l y -\frac{1}{x} \b.
\end{equation}
The Euler-type differential equation includes as a special case
the {\em Riccati-type} differential equation
\begin{equation}
\lbl{eq.ricf}
y'=-\l y -\frac{1}{x} \b y +g_0(x) y + g_1(x) y^2
\end{equation}
where $g_0$ and $g_1$ are analytic at $x=\infty$ and $g_0(x)=O(x^{-2})$.  

It is easy to see that a nonresonant Euler-type differential equation 
\eqref{eq.ODEf} has a unique formal power series solution 
$f_0(x) \in \BC[[1/x]]$. 
In fact, Equation \eqref{eq.ODEf} has a unique {\em trans-series}
solution of the form $\hat{f}(x)$ where
\begin{equation}
\lbl{eq.transf}
\hat{f}(x)=\sum_{k=0}^\infty C^k f_k(x), \qquad f_k(x)=x^{-\b k} e^{-\l k x} 
\sum_{n=0}^\infty \mu_{n,k}\frac{1}{x^n}, \qquad \mu_{1,0}=1
\end{equation}
is obtained by substituting the above expression in \eqref{eq.ODEf}
and equating the coefficient of every power of $C$ to zero. This leads to
a hierarchy of differential equations for $f_k(x)$ which is nonlinear 
for $k=0$, linear homogeneous for $k=1$, and linear inhomogeneous for 
$k \geq 2$. For example, we have:
\begin{equation}
\lbl{eq.linODEf}
f_1'=-\lambda f_1 + g_y(x,f_0) f_1
\end{equation}
The above hierarchy of differential
equations was first discovered by \'Ecalle and was studied in \cite[p.54]{CNP},
\cite{C}, \cite[p.11-12]{Sa1} and \cite[Sec.8]{Sa2}. An excellent presentation
for the special case of the {\em Riccati equation} was given in 
\cite[Sec.5]{BSS} and \cite{SS}. This special case is of interest to us
since $v(z)$ satisfies the Riccati equation \eqref{eq.v}. It is important
to realize that the coefficients $\mu_{n,k}$ are automatically computed from
\eqref{eq.ODEf} by means of a nonlinear recursion relation that involves
$\mu_{n',k'}$ for $(n',k') < (n,k)$. 

The next theorem, presumably well-known to the experts
but absent from the literature, 
links the asymptotics of the trans-series coefficients $(\mu_{n,k})$ in
terms of the neighboring coefficients $(\mu_{l,k\pm 1})$ and two Stokes
constants $S_{\pm 1}$. In the physics literature, $f_k(x)$ often occurs as
the perturbation theory of the $k$-instanton solution, in which case Theorem
\ref{thm.munk} states that the asymptotics of the coefficients of the
$k$-instanton solution can be computed by the $(k\pm 1)$-instanton solutions 
and two adiabatic invariants $S_{\pm 1}$.

\begin{theorem}
\lbl{thm.munk}
Consider the differential equation \eqref{eq.ODEf} with $\l \neq 0$
and the coefficients $(\mu_{n,k})$ of the unique trans-series 
solution \eqref{eq.transf}. Then, for every $k=0,1,2,\dots$ and $n$
large we have an asymptotic expansion 
\begin{eqnarray}
\lbl{eq.asall}
\mu_{n,k} & \sim & \l^{-n+\b} (k+1) {S_1 \over 2\pi \ri} 
\Gamma\bigl(n-\b \bigr)\, 
\biggl\{\mu_{0,k+1} 
+ \sum_{l=1}^{\infty} {\mu_{l,k+1} \l^{l} \over \prod_{m=1}^{l} 
(n-\b -m)} \biggr\} \\ 
\notag & + &
(-\l)^{-n-\b} (k-1){S_{-1} \over 2\pi \ri} \Gamma\bigl(n+\b \bigr)\, 
\biggl\{\mu_{0,k-1}
 + \sum_{l=1}^{\infty} {\mu_{l,k-1} (-\l)^{l} \over \prod_{m=1}^{l} 
(n+\b -m)} \biggr\}
\end{eqnarray}
with the understanding that $\mu_{n,k}=0$ for $k<0$. $S_{\pm 1}$ are two
Stokes constants, defined below. 
\end{theorem}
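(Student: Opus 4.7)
The plan is to use the Borel transform method of resurgence theory in the spirit of \'Ecalle. For each $k \geq 0$ set $\varphi_k(x) := \sum_{n \geq 0} \mu_{n,k}\,x^{-n}$, so that $f_k(x) = \re^{-\l k x}\,x^{-\b k}\,\varphi_k(x)$, and consider the Borel transform
\begin{equation*}
\hat\varphi_k(\xi) \;:=\; \sum_{n \geq 1} \frac{\mu_{n,k}}{\Gamma(n)}\,\xi^{n-1}.
\end{equation*}
Standard Gevrey-$1$ estimates for formal solutions of nonresonant Euler equations (see \cite[p.~54]{CNP}, \cite{Sa1,Sa2}; for the Riccati case \cite[Sec.~5]{BSS}) ensure that each $\hat\varphi_k$ defines a germ of holomorphic function at the origin. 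The asymptotics of $\mu_{n,k}$ will be read off from the nearest singularities of its analytic continuation.

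Substituting \eqref{eq.transf} into \eqref{eq.ODEf} and equating coefficients of each $C^k$ produces a hierarchy of ODEs for the $\varphi_k$ (nonlinear for $k=0$, linear homogeneous as in \eqref{eq.linODEf} for $k=1$, linear inhomogeneous for $k\geq 2$). Applying $\mathcal{B}$ converts these into an algebraic-convolutive system in the $\xi$-plane ($\partial_x \mapsto -\xi$, multiplication by $x^{-1} \mapsto$ integration from $0$, pointwise product $\mapsto$ convolution). The nonresonance hypothesis $\l \neq 0$ together with the decay assumption on $g$ yields, via \'Ecalle's structural theorem, that each $\hat\varphi_k$ is endlessly continuable on $\BC \setminus \l\,\BZ^{\ast}$ and that its alien derivatives at the two closest singularities satisfy the bridge equations
\begin{equation*}
\D_{\l}\,\varphi_k \;=\; (k+1)\,S_1\,\varphi_{k+1}, \qquad
\D_{-\l}\,\varphi_k \;=\; (k-1)\,S_{-1}\,\varphi_{k-1},
\end{equation*}
for scalars $S_{\pm 1}$ independent of $k$. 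The combinatorial prefactors $k \pm 1$ are the standard output of the commutation relation $[\D_{\pm \l}, \partial_C] = 0$ applied to the full trans-series $\sum_k C^k \varphi_k$.

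The bridge equations dictate the local singular behavior of $\hat\varphi_k$ at $\xi = \pm \l$: a branch cut whose discontinuity is a shift of $\hat\varphi_{k\pm 1}$ to the new base point, multiplied by the prefactor $(\xi \mp \l)^{\pm\b - 1}$ that records the change in the power factor $x^{\mp\b}$ between adjacent trans-series sectors. Recovering $\mu_{n,k}$ from $\hat\varphi_k$ by a Cauchy / inverse Laplace integral and deforming the contour to hug these two cuts, one evaluates the resulting Hankel-type integrals using $\int_0^\infty \re^{-s}\,s^{a-1}\,ds = \Gamma(a)$; this produces both the $\Gamma(n \mp \b)$ factors and the exponential decay $\l^{-n}$, while expanding the singular parts in powers of $(\xi \mp \l)$ reproduces the inner series in $\mu_{l,k\pm 1}$. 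Singularities at $\pm 2\l, \pm 3\l, \dots$ contribute only exponentially smaller corrections, beyond all orders of the expansion \eqref{eq.asall}.

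The main obstacle is the rigorous derivation of the bridge equations with the precise combinatorial prefactors $(k\pm 1)\,S_{\pm 1}$. Endless continuability of $\hat\varphi_k$ and the abstract form $\D_{\pm \l}\varphi_k \in \bigoplus_j \BC\,\varphi_j$ are standard consequences of \'Ecalle's theory, but pinning down that $\D_{\pm \l}$ shifts the instanton level by exactly $\pm 1$ and nothing more requires careful bookkeeping of the exponential grading $\re^{-\l k x}$ in the trans-series; the hypothesis $g(x,y) = O(y^2, x^{-2})$ is essential here to ensure that no coupling across non-adjacent instanton sectors arises at leading nonperturbative order. Once this grading argument is in place, the remainder of \eqref{eq.asall} follows from routine contour estimates of the type carried out in \cite{SS,BSS}.
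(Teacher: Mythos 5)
Your proposal takes essentially the same route as the paper's proof: Borel transform of the trans-series components, \'Ecalle's bridge equation $\D_{\pm\l}\phi_k = (k\pm 1)S_{\pm 1}\phi_{k\pm 1}$ controlling the local singularity structure at $p=\pm\l$, and Cauchy's formula plus contour deformation onto Hankel contours hugging the nearest cuts to extract the large-$n$ asymptotics. The step you flag as the main obstacle---rigorously establishing the bridge equation with the prefactors $(k\pm 1)$---is handled in the paper by citation to \cite{CNP,Sa1,Sa2,BSS} (with the Gevrey-$1$ input supplied via \cite[Lem.\ 1.1]{CK1}), and the Hankel integral is evaluated through a Beta-integral after the substitution $p=\l(1+z)$ rather than the bare $\Gamma$-integral you quote, but these are presentational differences within the same architecture.
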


\begin{proof}
We will give the proof in three steps. At first, let us suppose that
$k=0$ and let us denote $\mu_{n,0}=a_n$. In other words, we have
\begin{equation}
\lbl{eq.seriesf}
f_0(x)=\sum_{n=1}^\infty a_n \frac{1}{x^n} \in \BC[[x^{-1}]]
\end{equation}
is the unique formal power series solution of \eqref{eq.ODEf}. 
The first step provides the
existence of an asymptotic expansion of $(a_n)$.

{\bf Step 1}: With the above assumptions,
the sequence $(a_n)$ has an asymptotic expansion of the form
\begin{equation}
\lbl{eq.asexp}
a_n \sim \l^{-n+\b} {S_1 \over 2 \pi \ri}  \Gamma\Bigl(n-\b \Bigr)\, 
\biggl\{1 + \sum_{l=1}^{\infty} {\mu_{l} \l^{l} \over \prod_{m=1}^{l} 
(n-\b -m)} \biggr\}
\end{equation}
where $S_1$ is an unspecified Stokes constant.

A proof of Step 1 is given in  \cite[Lem.1.1]{CK1}. 
The proof of Lemma 1.1 of \cite{CK1} starts by writing down a
nonlinear recursion relation for the rescaled coefficients $b_n=\l^n a_n/n!$
of a formal power series solution in the form
$$
b_n=b_{n-1}+R_n(b_0,\dots,b_{n-1})
$$
where $R_n(b_0,\dots,b_{n-1})$ is a nonlinear term, a polynomial in the 
variables $b_0,\dots,b_{n-1}$. Then, one obtains an estimate of the nonlinear 
term of the form $R_n(b_0,\dots,b_n)=O(1/n^2)$. This implies that
$$
b_n=b_{n-1}+O\left(\frac{1}{n^2}\right)
$$
It follows that $(b_n)$ is increasing and bounded above. Thus, the limit
of $b_n$ exists. Using this as input, one bootstraps and obtains, order
by order in powers of $1/n$ an asymptotic expansion of $(b_n)$. 

The next step identifies the coefficients $\mu_l$
of the asymptotic expansion of $(a_n)$ with the coefficients of the
first trans-series $f_1(x)$ of \eqref{eq.ODEf}.

{\bf Step 2}: With the assumption of Step 1, we have 
\begin{equation}
\lbl{eq.mu1}
\mu_n=\mu_{n,1}
\end{equation}
for all $n$. This proves Theorem \ref{thm.munk} when $k=0$.

A proof of Step 2 appears implicitly in \cite[p.1939]{CK2} and also
in \cite{CNP, BSS,SS,Sa1,Sa2}. 
Let us take this opportunity to sketch the argument for 
completeness and also to fix several typographical errors of 
\cite[p.1939]{CK2}. Recall the notion of {\em Borel transform} which sends
power series in $1/x$ to power series in $p$:
$$
\frac{1}{x^{\a}} \mapsto 
\frac{p^{\a-1}}{\Ga(\a)}.
$$
Consider the differential equation \eqref{eq.ODEf}
with unique formal power series solution $f_0(x)$, and let $\phi_0(p)$
denote the Borel transform of $f_0(x)$ given by
\begin{equation}
\lbl{eq.borelt}
f_0(x)=\sum_{n=0}^\infty a_n \frac{1}{x^n}, \qquad
\phi_0(p)=\sum_{n=0}^\infty \frac{a_{n+1}}{n!} p^n.
\end{equation}
Step 1 implies that $\phi_0(p)$ is analytic at $p=0$.
Consider the punctured plane $W=\BC\setminus \calL$ where
$\calL=\l \BN^+$ is a discrete subset of punctures that
lies in a ray $\l [1,\infty)$. Consider the universal cover $\widetilde{W}$
which is identified with the set of homotopy classes (rel. boundary) of
paths that begin at $0$ and lie in $W$. Of importance is the portion 
$W^0 \cup W^1$ of $W$ where
\begin{itemize}
\item[(a)]
$W^0$ is the first Riemann sheet of $W$ given by $W^0=
\BC\setminus \l[1,\infty)$. In other words, $W^0$ is the plane minus one cut.
$$
\psdraw{W}{3in}
$$
\item[(b)]
$W^1$ is the second Riemann sheet of $W$ given by all paths in $W$ that
cross the cut $\l[1,\infty)$ at most once.
\end{itemize}
In \cite[Thm.A.1]{CK1} it is shown that $\phi_0$ admits analytic 
continuation as an analytic function in $W^0 \cup W^1$ and $\phi_0(p)$
has local expansion around the singularity $p=\l$ of the form 
\begin{equation}
\lbl{eq.zetaphi}
\D_{\l} \phi_0 = S_1 \phi_1
\end{equation}
where $S_1 \in \BC$ and 
\begin{equation}
\lbl{eq.borelf1}
\phi_1(p)=p^{\b-1} \sum_{l=0}^\infty \frac{\mu_{l,1}}{\Ga(\b+l)} p^l
\end{equation}
is the Borel transform of the trans-series solution $f_1(x)$ 
and 
\begin{equation}
\lbl{eq.aliend}
\D_{\mu}g(p)=\lim_{\e\to 0^+}  g(\mu+p e^{i \e})-g(\mu+p e^{-i \e})
\end{equation}
denotes the {\em variation} of a multivalued analytic germ $g$ near
a singularity $p=\mu$; the so-called {\em alien derivative} in \'Ecalle's
language \cite{Ec}.

A well-known application of Cauchy's theorem in Borel plane 
(see \cite[p.1939]{CK2} and also \cite[Sec.7]{CG1}) 
now implies the asymptotic expansion \eqref{eq.asexp}
of $(a_n)$. Let us sketch the argument, fixing some typographical errors from
\cite[p.1939]{CK2}, and giving an exact formula, well-known to the
physics community. 
Cauchy's theorem for $\phi_0(p)$ together with 
\eqref{eq.borelt} and the analyticity of $\phi_0(p)$ at $p=0$ implies that
$$
\frac{a_{n+1}}{n!} =\frac{1}{2 \pi \ri} \int_{\ga} \frac{\phi_0(p)}{p^{n+1}}
\rd p
$$
where $\gamma$ is a small circle around $p=0$. Now enlarge the contour $\ga$
to a contour $\calC \cup \calH_{\l}$ in the first sheet $W^0$, where 
$\calC$ is a circle of radius $|\l|+\e$, minus an arc, and $\calH_{\l}$ is
a Hankel contour centered at $\l$, for some fixed $\ep$: 
$$
\psdraw{hankel}{2in}
$$
Thus,
$$
\frac{a_{n+1}}{n!} =\frac{1}{2 \pi \ri} \int_{\calC} \frac{\phi_0(p)}{p^{n+1}}
\rd p + \frac{1}{2 \pi \ri} \int_{\calH_{\l}} \frac{\phi_0(p)}{p^{n+1}}
\rd p 
$$
The first integral can be estimated by $O(n!(|\l|+\ep)^{-n})$. For the
second integral, make a change of variables $p=\l(1+z)$ and let $\calH$ denote
a Hankel contour centered around $0$. Then, we have
$$
\int_{\calH_{\l}} \frac{\phi_0(p)}{p^{n+1}} \rd p=
\l^{-n} \int_{\calH_0} \frac{\phi_0(\l(1+z))}{(1+z)^{n+1}} \rd z=
\l^{-n} \int_0^{\e} \frac{\D_{\l}\phi_0(\l z)}{(1+z)^{n+1}} \rd z
$$
Now, Equations \eqref{eq.zetaphi} and \eqref{eq.borelf1} imply that
$$
\D_{\l} \phi_0(\l z)
=S_1 \l^{\b-1}\sum_{l=0}^\infty \frac{\mu_{l,1} \l^l}{\Gamma(\b+l)} z^{\b+l-1}
$$
and the series on the right hand side, after multiplication by $p^{-\b+1}$,
is analytic at $p=0$. A useful {\em Beta-integral} calculation gives that
$$
\int_0^\infty \frac{z^{\ga-1}}{(1+z)^{n+1}} \rd z
=\frac{\Ga(\ga)\Ga(n+1-\ga)}{\Ga(n+1)},
$$
and therefore
$$
\int_0^{\ep} \frac{z^{\ga-1}}{(1+z)^{n+1}} \rd z
=\frac{\Ga(\ga)\Ga(n+1-\ga)}{\Ga(n+1)}(1+O((|\l|+\ep)^{-n}))
$$
Interchanging summation and integration by applying Watson's lemma, 
(see \cite{O}) it follows that
$$
a_{n+1} \sim \l^{-n+\b-1} \frac{S_1}{2 \pi \ri} 
\sum_{l=0}^\infty \Gamma(n-\b-l+1) \mu_{l,1} \l^l
$$
Comparing the above with Equation \eqref{eq.asexp} concludes Step 2.
Strictly speaking, the above analysis works only 
when $\Re(\b)>-1$. This is a local integrability assumption of the 
Beta-integral. The asymptotic expansion \eqref{eq.asexp} remains valid as 
stated even when $\Re(\b) \leq -1$ as follows by first integrating $f(x)$ 
a sufficient number of times, and then applying the analysis. This is exactly 
what was done in \cite{CK2} and \cite[Sec.7]{CG1}. 

Let 
$$
\phi_k(p)=p^{-k\b-1}\sum_{n=0}^\infty \frac{\mu_{n,k}}{\Ga(k\b+n)} p^n
$$ 
denote the Borel transform of $f_k(x)$ (given by \eqref{eq.transf}) for 
$k=0,1,2,\dots$.

{\bf Step 3}: For $k=0,1,2,\dots$, $\phi_k(p)$ has analytic continuation
as a multivalued analytic function in $\BC\setminus (-k\l+\BN^+\l)$.
Moreover, we have
\begin{equation}
\lbl{eq.alienf}
\D_{l \l} \phi_k= (k+l)S_{l} \phi_{k+l}
\end{equation}
for $l=1,-1,-2,-3,\dots$, $k=0,1,2,3,\dots$, $k+l=0,1,2,\dots$ where
$S_{l}$ are constants. Moreover \cite[Prop.5.4]{BSS}, 
in the case of the Riccati equation \eqref{eq.ricf}, we have $S_l=0$ for 
$l \leq -2$. Equation \eqref{eq.alienf}
is the so-called {\em Bridge Equation} of \'Ecalle, and appears in
\cite[p.63]{CNP} and also in \cite[p.11-12]{Sa1}. For a detailed
discussion, see also \cite[Prop.5.4]{BSS} and \cite{SS,Sa2}.

Said differently, the Bridge Equation implies that $\phi_0(p)$
admits analytic continuation as an analytic function in $W$
and the local expansion of every branch of $\phi_0(p)$ at each singularity
is of the form \eqref{eq.zetaphi} for a suitable Stokes constant $S_j$.
This is the notion of {\em resurgence} coined and studied systematically by 
\'Ecalle in the eighties. Unfortunately, \'Ecalle's work remains unpublished, 
and appears in three Orsay preprints that the interested reader 
is welcomed to read, \cite{Ec}. Fortunately for our purposes, the
Bridge equation for Euler-type differential equations 
has been established in print in the above mentioned references.

The Bridge Equation implies that
the nearest nonzero singularity of $\phi_k(p)$ appears at $p=\pm \l$
(resp. $p=\l$) when $k \geq 2$ (resp. $k=0,1$), and that the variation of
$\phi_k(p+\l)$ (resp. $\phi_k(p-\l)$) is proportional to $\phi_{k+1}(p)$
(resp. $\phi_{k-1}(p)$). Theorem \ref{thm.munk} follows for all $k$
by applying Cauchy's formula to $\phi_k(p)$, and 
deforming the contour of integration as in the case of $k=0$.
\end{proof}

Let us make some remarks.

\begin{remark}
\lbl{rem.singularODE}
As stated, Theorem \ref{thm.munk} requires analyticity of the coefficient
$$
g(x,y)=\sum_{n=-1}^\infty g_n(x) y^{n+1}
$$
of \eqref{eq.ODEf} at $(\infty,0)$. In fact, the key Equation 
\eqref{eq.zetaphi} and Theorem \ref{thm.munk} remains
true when $k=0$ and the Borel transform $\ga_n(p)$ of $g_n(x)$ has analytic
continuation as a multivalued analytic function in $\BC$ minus a discrete
set of points, and $\ga_n(p)$ is analytic in the disk $|p|<|\l|+\ep$
for all $n \in \BN$. 
This follows from the proof of the resurgence of $\phi_k(p)$ in Borel plane;
see \cite{CNP,Sa1,Sa2,BSS,SS}. In particular, Equation \eqref{eq.zetaphi}
and Theorem \ref{thm.munk} holds for $k=0$ when the $\ga_n(p)$ is
analytic multivalued in $\BC\setminus 2 \BZ^* \l$ for all $n \in \BN$, where 
$\BZ^*=\BZ\setminus\{0\}$. Theorem \ref{thm.munk} will be applied in this
form to give a proof of Theorem \ref{thm.1}.
\end{remark}

\begin{remark}
Starting from a sequence $(\mu_{n,0})$ with an asymptotic expansion
\eqref{eq.asall}, one can consider
the finitely many sequences $(\mu_{n,1})$ that arise, and repeat this
process. This may be continued for ever, arriving at a notion of resurgence
for sequences $(\mu_{n,0})$ that may not come from differential equations;
see for example \cite{CG1,CG2,Ga}.
\end{remark}
 
\begin{remark}
\lbl{rem.StokesS}
The constants $S_{\pm 1}$ in Equation \eqref{eq.asall}, if nonzero, can be 
numerically approximated rigorously and efficiently.
For examples, see \cite{CCK,JK,BGR}. On the other hand, deciding whether
$S_{\pm 1}=0$ is not algorithmically known. In general
the Stokes constants are transcendental invariants of the differential 
equation, sometimes known by the name of {\em adiabatic invariants}.
\end{remark}

\begin{remark}
\lbl{rem.vectorf}
Theorem \ref{thm.munk} is valid for systems of first order nonlinear 
differential 
equations of rank 1 under a nonresonance assumption, see \cite{C} for
Steps 1 and 2 and a comment on \cite[Sec.13]{Sa2}. In that
case the nonlinear equation is
\begin{equation}
\lbl{eq.vectorf}
{\bf y}'=-\La {\bf y} -\frac{1}{x} B {\bf y} +{\bf g}(x,{\bf y})
\end{equation}
and the linearized equation is
$$
{\bf y}'=-\La {\bf y} -\frac{1}{x} B {\bf y}
$$
with {\em eigenvalue matrix} $\La=\diag(\l_1,\dots,\l_r)$
and {\em exponent matrix} $B=(\b_1,\dots,\b_r)$, where $\diag(d_1,\dots,d_r)$
denotes a diagonal matrix with diagonal entries $d_1,\dots,d_r$.
The Borel transform ${\bf \phi_0}$ of the unique formal power series solution 
${\bf f_0}$ to 
\eqref{eq.vectorf} is a multivalued analytic function on 
$W=\BC\setminus \calL$ where $\calL=\l_1 \BN^+ + \dots \l_r \BN^+$. Moreover,
${\bf \phi_0}$ has analytic continuation in the principal sheet
$W^0=\BC\setminus \cup_{j=1}^r \l_j [1,\infty)$ which is a plane cut by
$r$ rays.
The asymptotic expansion of the coefficients of a component of a formal 
power series solution ${\bf f_0}$ of the nonlinear differential equation
has an asymptotic expansion of the form
\begin{equation}
\lbl{eq.total}
a_n \sim \sum_{j \in J} \l_j^{-n+\b_j} \frac{S_j}{2 \pi \ri}
\Gamma\Bigl(n-\b_j \Bigr)\, 
\biggl\{1 + \sum_{l=1}^{\infty} {\mu_{l,j} \l_j^{l} \over \prod_{m=1}^{l} 
(n-\b_j -m)} \biggr\}
\end{equation}
where $J=\{ j \in \{1,\dots,r\}\, | |\l_j|
=\min\{|\l_s| \, | \, s=1,\dots,r\}$.
\end{remark}
The analogue of Equation \eqref{eq.asall} does not seem to exist in the
literature, especially in the resonant case. For some partial results 
regarding the Painlev\'e I equation, see \cite{CK1}.

\section{Proof of Theorem \ref{thm.1}}
\lbl{sec.thm1}

\subsection{Theorem \ref{thm.munk} implies Theorem \ref{thm.1}}
\lbl{sub.thm1}

In this section we give a proof of Theorem \ref{thm.1} using 
Theorem \ref{thm.munk}. Consider the differential equation \eqref{eq.v} 
where $u(z)$ satisfies \eqref{eq.u}. The change of variables 
\begin{equation}
\lbl{eq.uvUV}
u(z)=z^{1/2}(1+U(x)), \qquad v(z)=z^{1/4}(-\sqrt{3}+V(x)), \qquad
x=z^{5/4}
\end{equation}
converts \eqref{eq.u} to the following rank-1 differential equation
\begin{equation}
\lbl{eq.ODEU}
U''(x)+\frac{U'(x)}{x} -\frac{96 U(x)^2}{25}-\frac{4 U(x)}{25 x^2}
-\frac{4}{25 x^2}=0
\end{equation}
which can be written as a first order rank-1 differential equation  
in the form
\begin{equation}
\lbl{eq.ODEU2}
{\bf U}'=-\hat\Lambda {\bf U} -\frac{1}{x} \hat B 
{\bf U} + {\bf \hat g}(1/x,{\bf U})
\end{equation}
where
\begin{equation}
{\bf U}=\left(\begin{matrix} U \\ U' \end{matrix}\right),
\qquad
\hat
\Lambda=\left(\begin{matrix} 0 & -1 \\ -\frac{192}{25} & 0 \end{matrix}\right),
\qquad
\hat B=\left(\begin{matrix} 0 & 0 \\ 0 & 1 \end{matrix}\right)
\end{equation}
and
$$
{\bf \hat g}(1/x,{\bf U})=
\left(\begin{matrix} 0 \\  \frac{96 U(x)^2}{25}+\frac{4 U(x)}{25 x^2}
+\frac{4}{25 x^2} \end{matrix}\right)
$$
A gauge transformation ${\bf U} \mapsto {\bf U} G$ converts \eqref{eq.ODEU2}
into the normalized resonant rank-1 differential equation
\begin{equation}
\lbl{eq.ODEU3}
{\bf U}'=-\Lambda {\bf U} -\frac{1}{x}  B 
{\bf U} + {\bf g}(1/x,{\bf U})
\end{equation}
where
\begin{equation}
\Lambda=\left(\begin{matrix} A & 0 \\ 0 & -A \end{matrix}\right),
\qquad
B=\left(\begin{matrix} \frac{1}{2} & 0 \\ 0 & \frac{1}{2} 
\end{matrix}\right)
\end{equation}
and $A=8 \sqrt{3}/5$ as in \eqref{eq.AS}. Observe that the differential
equation \eqref{eq.ODEU3} is resonant.
Nontheless, Theorem \ref{thm.1} involves the
function $v(z)$ which satisfies a Riccati equation \eqref{eq.v}. 
The substitution \eqref{eq.uvUV} converts \eqref{eq.v} to the following
rank 1 differential equation
\begin{equation}
\lbl{eq.ODEV1}
V'(x)+\frac{4 \sqrt{3}}{5} V(x)-\frac{\sqrt{3}}{5x}+\frac{6 U(x)}{5}
+\frac{V(x)}{5x}-\frac{2 V(x)^2}{5}=0
\end{equation}
This is a nonresonant Riccati differential equation of the form 
\eqref{eq.ricf} with eigenvalue $A/2=\frac{4 \sqrt{3}}{5}$ and exponent
$\beta=0$. Even though the coefficients of \eqref{eq.ODEV1} are not
analytic, their Borel transform is a multivalued analytic function in
$\BC\setminus \BZ^* A$ and Remark \ref{rem.singularODE} applies.
The trans-series solution of \eqref{eq.v} is of the form
\begin{equation}
\lbl{eq.transv}
v(z)=\sum_{l=0}^\infty C^l v_l(z)
\end{equation}
where $v_0(z)$ is given in \eqref{eq.fv} and 
$$
v_k(z)=z^{1/4} e^{-\frac{A}{2}z^{5/4}l} \sum_{n=0}^\infty v_{n,k}z^{-5n/4}
$$
and we normalize $v_{0,1}=1$. Theorem \ref{thm.1} uses only $v_1(z)$
which satisfies the linearized version 
$$
v_1'-v_0 v_1=0
$$
of \eqref{eq.v}, as stated in \eqref{eq.linv1}. 
Theorem \ref{thm.1} follows from Theorem \ref{thm.munk} when $k=0$
applied to the Riccati equation \eqref{eq.ODEV1}. This Riccati equation
does not have analytic coefficients in a neighborhood of $x=\infty$, however
Remark \ref{rem.singularODE} applies.
A rigorous numerical calculation of the Stokes constant $S'$ is possible,
following standard arguments that appear for example in \cite{CCK,JK}
to conclude that $S' \neq 0$. 
This concludes the proof of Theorem \ref{thm.1}.
\qed

\begin{remark} 
\lbl{rem.vk}
One can study in detail the full trans-series solution 
\eqref{eq.transv}. The $v_k$ satisfy the differential 
equations:
\be
\label{vkp}
v_k' -{1\over 2} \sum_{i=0}^k v_i v_{k-i}=0.
\ee
In principle, in writing the equation for the formal trans-series solution 
to $v$, we have to include in Equation \eqref{eq.v} a full formal trans-series 
solution for $u(z)$, but it is easy to see that the only way to find a 
solution for the $v_k$ is to set this trans-series to zero. From (\ref{vkp}) 
one easily deduces the following recursion relation for $v_{n,l}$:
\be
v_{n+1,k} =-{1\over {\sqrt{3}} (k-1)} \Bigl\{ {5n \over 4} v_{n,k} 
+ \sum_{l=2}^{n+1} v_{n+1-l,k}v_l
+{1\over 2} \sum_{i=1}^{k-1} \sum_{l=0}^{n+1} v_{l,i} v_{n+1-l,k-i} \Bigr\}, 
\qquad k\ge 2.
\ee
One finds, for example, 
\be
v_{0,k}=(-1)^{k-1} (2 {\sqrt{3}})^{1-k}, \quad k\ge 1.
\ee
For equations of the Riccati type, it was shown in \cite{BSS} that the full trans-series expansion can be written in terms of 
three formal power series, as follows. Let us denote
\be
\hat v_0(x) = \sum_{n=2}^{\infty} v_{n} x^{-n}, \qquad \hat v_k(x)= \sum_{n=0}^{\infty} v_{n,k} x^{-n}, \quad k\ge 1.
\ee
Then, there are formal power series $v_{\pm}(x)$ such that
\be
\hat v_k= (-1)^{k-1} v_+^{k-1} v_-^k (1-v_+ \hat v_0), \qquad k \ge 1. 
\ee
In our case we have 
\be
\ba
v_+(x)&=\frac{1}{2 \sqrt{3}}+\frac{5 }{192 \sqrt{3}}{1\over x^2}-\frac{25 }{1152}{1\over x^3}+\frac{3149 }{36864 \sqrt{3}}{1\over x^4}-\frac{15995
   }{110592}{1\over x^5}+\CO\left(x^{-6}\right),\\
v_-(x)&=1-\frac{1}{4 \sqrt{3}}{1\over x}-\frac{1}{24}{1\over x^2}-\frac{1459 }{11520 \sqrt{3}}{1\over x^3}-\frac{5429 }{34560}{1\over x^4}-\frac{114343 }{138240
   \sqrt{3}}{1\over x^5}+\CO\left(x^{-6}\right).
   \ea
   \ee

\end{remark}

\subsection{A brief discussion of the Riemann-Hilbert approach}
\lbl{sub.RH}

The Riemann-Hilbert method is an alternative way of proving and computing
asymptotic expansions of the form \eqref{eq.total}. For an example relevant
to the results of the paper, see \cite[App.A]{Ka} where Kapaev computes the
asymptotic expansion of the sequence $(u_n)$. The Riemann-Hilbert method 
uses the fact that all solutions of the Riemann-Hilbert problem (such 
as the function $u(z)$ and conjecturally also $v(z)$) of isomonodromy are
{\em meromorphic} 
functions in the $z$-plane, with prescribed behavior at various
sectors. Applying the Cauchy formula and a deformation of the contour argument
as in Claim 2 above, allows one to deduce the asymptotic expansion of the
coefficients $(a_n)$ of a formal solution; see for example \cite[App.A]{Ka}.
In addition, the Riemann-Hilbert approach computes exactly the corresponding
Stokes constants $S_j$ in \eqref{eq.total}. Whether the Riemann-Hilbert
method computes the coefficients $\mu_n$ in Equation \eqref{eq.asexp} via
the recursion \eqref{eq.mu} is unknown to us.

The Riemann-Hilbert approach overlaps with
(but is neither a subset or a superset of) the theory of Borel transform.
On the one hand, 
the differential equations that are amenable to the Riemann-Hilbert
method are very closely linked to the notion of {\em integrability}, whereas
the Borel transform method can deal with generic nonlinear differential
equations. On the other hand, the Riemann-Hilbert method can deal with
variational problems that do not come from differential equations. 

It was recently realized that the method of Borel transform is useful in
studying problems of physical origin (such as those originating in 
3-dimensional Quantum Topology) that do not come from differential/difference
equations. See for example the results of \cite{CG1,CG2} and the survey 
paper \cite{Ga}.

Without doubt, the Riemann-Hilbert method is complementary and overlapping
with the method of Borel transform, and the combination of the two methods
can give powerful results which have yet to be witnessed.

An alternative method to asymptotic expansions, that include exact computation
of the Stokes constants is available from physics. The part of this
theory, relevant to our problem, will be reviewed in Section \ref{sec.mm}.

\section{Matrix models, orthogonal ensembles and non-orientable graphs}
\lbl{sec.BN}

In this section we will give our motivation for Conjecture \ref{conj.1} 
following the study of the {\em symmetric quartic matrix model} studied by 
Br\'ezin--Neuberger \cite{BN} and Harris--Martinec \cite{HM}. 

After reduction to eigenvalues, the so-called {\it partition function} of 
a matrix model can be written as 
\be
\lbl{partitionf}
Z_{\beta} ={1\over (N!)^{\beta} (2 \pi)^{\beta N}} 
\int \prod_{i=1}^N \rd\lambda_i \, |\Delta (\lambda)|^{2\beta} 
\re^{-{\beta \over  g_s} \sum_{i=1}^N V(\lambda_i)},
\ee
where 
\be
\Delta(\lambda_i) =\prod_{i<j}(\lambda_i-\lambda_j)
\ee
is the Vandermonde determinant of the eigenvalues and $V(\lambda)$ is a 
polynomial called the {\it potential} of the 
matrix model. The index $\beta$ takes 
the values $1,1/2, 2$ for Hermitian, real symmetric and symplectic 
matrices, respectively. The {\it Gaussian} matrix model is obtained for a 
potential of the form 
\be
V(\lambda)={1\over 2} \lambda^2.
\ee
In order to obtain generating functions of maps, one finds an asymptotic 
expansion of $Z$ around $N=\infty$. One also sets 
\be
g_s={t\over N}
\ee
where $t$ is the so-called {\it 't Hooft parameter} and is kept finite in 
the expansion. This asymptotic 
expansion is called the large $N$ expansion. Since equivalently we are doing 
the expansion around $g_s=0$, 
we see from (\ref{partitionf}) that the large $N$ expansion is a 
generalization of the standard asymptotic expansions of integrals 
depending on parameters. For the case $\beta=1/2$, which is the one 
considered in \cite{BN,HM}, 
one finds that $F=\log Z$, the so-called {\it free energy}, can be written 
in the form 
\be
F(g_s, t)={1\over 2} F_{\rm o}(g_s, t)+ F_{\rm u}(g_s, t)
\ee
where $F_{\rm o}(g_s, t)$ and $F_{\rm u}(g_s, t)$ have the asymptotic 
expansions
\be
F_{\rm o}(g_s, t)=\sum_{g=0}^{\infty} g_s^{2g-2} F^{\rm o}_g (t) , \qquad 
 F_{\rm u}(g_s, t)=\sum_{r=1/2}^{\infty} g_s^{2r-2} F^{\rm u}_r(t),
\ee
where in the second sum $r$ takes both {\it integer} and {\it half-integer} 
values.  The generating functions $F^{\rm o}_g (t)$ count 
maps on an orientable surface of genus $g$, while $F^{\rm u}_r (t)$ count 
maps on an non-orientable surface with $2r$ crosscaps. 
These generating functions can be computed in closed form by using elegant 
methods started in \cite{BIPZ,BIZ} and which culminated in the 
algebro-geometric formulation of Eynard and collaborators \cite{E,CE,EO}. 
It turns out that all these quantities can be calculated by computing 
residues of meromorphic forms on an algebraic curve of the form
\be
\lbl{scurve}
y(x) = M(x) {\sqrt{\sigma(x)}}, \qquad \sigma(x)= (x-a)(x-b).
\ee
If $V(x)$ is a polynomial of degree $d$, $M(x)$ is also a polynomial, of 
degree $d-2$. This curve is called the {\it spectral curve} of the matrix 
model, and its detailed form depends on the potential $V(z)$ used 
in (\ref{partitionf}). Notice that this curve has a branch cut on $[a,b]$, 
and the branch points $a,b$, as well as $M(x)$ are easy to compute once the 
potential is given (see for example \cite{DGJ,M2} for reviews and examples). 
For a {\em quartic potential} 
\be
V(z)={z^2\over 2} + \lambda z^4
\ee
one has
\be
b=-a= 2\alpha, \quad \alpha^2={1\over 24 \lambda} 
\biggl( -1+ {\sqrt { 1 + 48 \lambda t}}\biggr), 
\ee
and
\be
\label{mquartic}
M(x)= 1+ 8 \lambda \a^2 + 4 \lambda x^2.
\ee
If we set $t=1$ for simplicity, it is easy to see from this description 
that the generating functions $F^{\rm o}_g (\lambda)$, 
$F^{\rm u}_r (\lambda)$ are analytic at 
$\lambda=0$ and that the nearest singularity is at 
\be
\lbl{singularity}
\lambda_c=-{1\over 48}. 
\ee
In terms of the variable $y$ defined by 
\be
y=g_s^{-4/5} \Bigl(1 -{\lambda \over \lambda_c}\Bigr)
\ee
one finds that the generating functions behave as 
\be
g_s^{2g-2} F^{\rm o}_g (\lambda) \sim c^{\rm o}_g y^{-5(g-1)/2}, \qquad 
g_s^{2r-2}F^{\rm u}_g (\lambda) \sim c^{\rm u}_r y^{-5(r-1)/2}, 
\ee
for $g\not=1$. For $g=1$ we rather have
\be
F^{\rm o}_1 (\lambda) \sim c^{\rm o}_1 \log y. 
\ee
The {\it double-scaled} generating functions $F^{\rm o}_{\rm ds}(y)$, 
$F^{\rm u}_{\rm ds}(y)$ are then defined by
\be
\ba
F^{\rm o}_{\rm ds}(y) &= c^{\rm o}_0 y^{5/2} +c^{\rm o}_1 \log y 
+ \sum_{g\ge 2} c^{\rm o}_g y^{-5(g-1)/2}, \\
F^{\rm u}_{\rm ds}(y)&=\sum_{r\ge 1/2} c^{\rm u}_r y^{-5(r-1)/2}.
\ea
\ee
It turns out that these asymptotic expansions can be obtained as solutions 
to ordinary differential equations. For the orientable part, it was shown 
already in 
\cite{BK,DS,GM} that
\be
f(y) =-(F^{\rm o}_{\rm ds})''(y)
\ee
(also called the double-scaled {\it specific heat}) satisfies the 
Painlev\'e I equation
\be
f^2-\frac{1}{3} f''=y.
\ee
In order to compute the coefficients $c_r^{\rm o}$ one considers the formal 
power series for $f$
\be
f(y)=\sum_{n=0}^{\infty} f_n y^{-(5n -1)/2}.
\ee
On the other hand, it was shown in \cite{BN,HM} that the first derivative
\be
g(y)=-2 (F^{\rm u}_{\rm ds})'
\ee
satisfies the differential equation
\begin{equation}
\lbl{eq.fg}
g^3-6gg'+4g''-6gf+6f'=0.
\end{equation}
In order to compute the coefficients $c_r^{\rm u}$ one considers the formal 
power series for $g$ 
$$
g(y)=\sum_{n=0}^\infty g_n y^{-(5n-1)/4}.
$$
The differential equation (\ref{eq.fg}) can be integrated out once and gives 
\cite[Eq.3.25]{BN}
\be
f=-\frac{2}{3}g'+\frac{1}{6}g^2+c \,
\exp\left( \int_1^y \rd y' \, g(y') \right).
\end{equation}
Br\'ezin-Neuberger argue that the appropriate boundary conditions fix $c=0$, 
and it follows that $(f,g)$ satisfy the pair of equations 
\begin{equation}
\lbl{eq.fga}
f^2-\frac{1}{3} f''=y, \qquad
f=-\frac{2}{3}g'+\frac{1}{6}g^2.
\end{equation}
Notice that the second equation is a Riccati type equation for $g$ with $f$ 
known. 

To bring these two equations into the form \eqref{eq.u}, \eqref{eq.v} we must 
make
the change of variables
\begin{equation}
\lbl{eq.yz}
y=2^{\frac{2}{5}}z, \qquad f=2^{\frac{1}{5}} u.
\end{equation}
The change of variables \eqref{eq.yz} forces the following change of variables
\be
\lbl{gf}
g=2^{\frac{3}{5}} v, \qquad \CF_{\rm ds}(z)=2 F_{\rm ds}(y).
\ee
With these change of variables, the differential equations \eqref{eq.fga}
for the pair of functions $(u,v)$ become \eqref{eq.u} and \eqref{eq.v}. 
In the new variables, the specific heat of the matrix model 
\be
-\CF''_{\rm ds}(z) =-{1\over 2} (\CF^{\rm o} _{\rm ds})''(z) 
- (\CF^{\rm u}_{\rm ds})''(z)
\ee
 is given by
\be
\lbl{totsh}
u(z)+v'(z)=\sum_{n=0}^\infty u_n z^{-(5n-1)/2}-
\sum_{n=0}^\infty \frac{5n-1}{4} v_n z^{-(5n+3)/2}
\ee
up to an overall factor $1/2$. Notice that in the second sum, the index $n$ 
is related to the half-genus $g$ of
\cite[p.244]{BC} by
$$
n=2g-1.
$$
Thus, the total specific heat (\ref{totsh}) can be written as
\be
u(z)+v'(z)=\sum_g \biggl(u_g - \frac{5g-3}{2} v_{2g-1} \biggr) z^{-(5g-1)/2}
\ee
where $g$ runs now through the natural numbers and the half natural numbers.
On the other hand, the total heat for this matrix model should 
behave the same way as the generating series 
$$
\sum_g (t_g+p_g) x^g
$$
of \cite{BC}. The relationship (\ref{tgrel}) between $t_g$ and $u_g$ 
forces us to predict that
$$
p_g=\frac{1}{2^{g-2}\Gamma\left(\frac{5g-1}{2}\right)} \frac{5g-3}{2}v_{2g-1}
=\frac{1}{2^{g-2}\Gamma\left(\frac{5g-3}{2}\right)} v_{2g-1}
$$
which is exactly Conjecture \ref{conj.1}. 

We emphasize that this conjecture is a consequence of the matching between counting problems in combinatorics and matrix models. 
Although this relation can be rigorously established in certain cases (see for example \cite{BDG}), the physics techniques to extract the 
coefficients $t_g$, $p_g$ seem to be more efficient than current mathematical techniques, as they lead to simple nonlinear differential equations.

\section{A physics derivation of the Stokes constants}
\lbl{sec.mm}

This section is of independent interest and is included for completeness.
It offers an alternative exact computation (including the Stokes constant
$S'$ of Equation \eqref{eq.stokes})
of the asymptotic expansion of sequences of enumerative interest, 
and although it is not rigorous, it is a motivation of Conjecture 
\ref{conj.2}.  

In many quantum field theories, the standard perturbative expansion in powers 
of 
the coupling constant $g$ has nonperturbative instanton
corrections which behave as $\re^{-k A /g}$, where $A$ is the instanton action 
and 
$k$ is the instanton number. Perturbation theory in powers of $g$
at a fixed instanton number $k$ is in principle possible, and when 
successful, it leads to a computation of the coefficients $a_{k,n}$
of series of the form
\be
\lbl{kinst}
\re^{-k A/g} g^{\gamma} \sum_{n=0}^\infty a_{k,n}g^n. 
\ee
In the physics community it is believed that, at least in some quantum field 
theories, the asymptotic 
expansion of $(a_{k,n})$
for fixed $k$ and large $n$ is exactly computable in terms of the coefficients
$(a_{k',n})$ for $k'$ nearby to $k$ much like Equation \eqref{eq.asall}
of Theorem \ref{thm.munk}. This 
belief  can be mathematically formulated as a resurgence conjecture for the
power series that appear in perturbative quantum field theory. For an example
of this principle for a 3-dimensional quantum field theory, see \cite{Ga}.
One important aspect of the above belief is the ability to compute exactly
all constants (including the Stokes constants) in the asymptotic expansions, 
including the Stokes constants, in some class of quantum field theories.
This is a reflection of integrability of these theories. 

The $1/N$ expansion of gauge theories with gauge group $\mathrm{U}(N)$ 
is in this respect very similar to the standard perturbative expansion in 
powers 
of the coupling constant. The $1/N$ expansion also has nonperturbative 
$k$-instanton
corrections which behave as $\re^{-k A N}$ \cite{KNN,Sh} and have 
the structure
\be
\lbl{kNinst}
\re^{-k N A} N^{\gamma} \sum_{n=0}^\infty \frac{a_{k,n}}{N^n}. 
\ee
In some cases these series should also have resurgence properties and 
the corresponding Stokes constants should be exactly calculable. 

The field theory relevant to the 
current paper is a matrix model, where resurgence properties are certainly 
expected. We can study the behavior of the 
instanton corrections (\ref{kinst}) in this matrix model near the 
singularity (\ref{singularity}) and 
extract as before the most singular part of each coefficient $a_{k,n}$ 
(which in this case will be functions of 
$t, \lambda$). One finds that, in this double-scaling limit, the 
$k$-instanton correction (\ref{kinst}), 
leads to the $k$-th term in the trans-series expansion of the partition 
function. Moreover, one can show 
(at the physics level of rigor) that the one-instanton amplitude gives the 
discontinuity of the first term of the trans-series 
along the Stokes line (see for example \cite{CS} for a detailed exposition), 
and this makes possible to calculate the Stokes constant. 
In the case of matrix models, this was shown in an important paper by F. 
David \cite{Da}, who calculated explicitly 
the Stokes constant $S$ in (\ref{eq.AS}) by using matrix model techniques. 
His calculation was further clarified in \cite{H+} and 
extended in \cite{MSW}. The connection between instanton calculus in matrix 
models and trans-series of differential equations of the Painlev\'e type 
is further discussed in \cite{M1}. In this section, we will compute the 
Stokes constants $S$ and $S'$ by adapting the
matrix model technology of \cite{MSW} to the case of matrix ensembles with 
arbitrary values of $\beta$. Needless to say, the computations in this 
section are not rigorous. 

In computing the asymptotic expansion of the partition function 
(\ref{partitionf}) we have expanded around the saddle point $\lambda_i=0$ 
for 
all the eigenvalues $i=1, \cdots, N$. The first instanton correction 
corresponds to a saddle-point expansion in which one of the eigenvalue 
integrations 
is around a nontrivial saddle point $\lambda =x_0$, and can be written as 
\be
\lbl{onemm}
Z_\beta^{(1)} (N)= {N \over (N!)^{\beta} (2\pi)^{\beta N}} 
\int_{x\in \CI} \rd x \, \re^{-{\beta \over g_s}  V (x)}
 \int_{\lambda \in \CI_0}  \prod_{i=1}^{N-1}\rd \lambda_i\, 
|\Delta (x, \lambda_1, \ldots, \lambda_{N-1})|^{2\beta}\, 
\re^{-{\beta \over g_s} \sum_{i=1}^{N-1} V (\lambda_i)},
\ee
where the first integral in $x$ is over the saddle--point contour around 
the nontrivial saddle-point, which we have denoted by $x\in \CI$, while 
the rest of the $N-1$ eigenvalues are integrated over the saddle--point 
contour $\CI_0$ around the trivial saddle-point. 
The overall factor of $N$ is due to the fact that there are $N$ choices 
for $x$ among the $N$ eigenvalues. We have
\be\lbl{oneinstex}
\ba
Z_\beta^{(1)}(N) &= {N \over (2 \pi N)^{\beta}}  \, Z^{(0)}_{\beta}(N-1) 
\int_{x\in \CI} \rd x \left\langle \det |x {\bf 1} 
- \Lambda_{N-1}|^{2\beta} \right\rangle^{(0)}_{N-1}\, 
\re^{-{\beta\over 2g_s} V(x)} \\
&\equiv {N \over (2 \pi N)^{\beta}} \, Z_{\beta}^{(0)}(N-1) 
\int_{x \in \CI} \rd x\, f(x).
\ea
\ee
\noindent
The notation in this equation is as follows. $Z_\beta^{(0)}(N)$ is the 
partition function (\ref{partitionf}) evaluated around the trivial  
saddle--point $\lambda_i=0$. $\Lambda_{N-1}$ is the diagonal 
$(N-1) \times (N-1)$ 
matrix given by ${\rm diag}(\lambda_1, \cdots, \lambda_{N-1})$. 
$\langle \CO\rangle_N^{(0)}$ is the normalized average of any symmetric 
polynomial $\CO(\lambda_i)$ in the eigenvalues $\lambda_i$, computed by 
a saddle-point calculation around the standard saddle--point, 
\be
\left\langle \CO \right\rangle^{(0)}_N = {\int_{\lambda\in \CI_0} 
\prod_{i=1}^N \rd \lambda_i\, |\Delta(\lambda)|^{2\beta} \, \CO(\lambda)\, 
\re^{-{\beta\over 2g_s} \sum_{i=1}^N V(\lambda_i)} \over 
\int_{\lambda\in \CI_0} \prod_{i=1}^N \rd \lambda_i\, 
|\Delta (\lambda)|^{2\beta} \, \re^{-{\beta \over 2 g_s} 
\sum_{i=1}^N V(\lambda_i)}}.
\ee
Finally, we have also defined 
\begin{equation}\lbl{fx}
f(x) = \left\langle \det |x {\bf 1} - \Lambda_{N-1}|^{2\beta} 
\right\rangle^{(0)}_{N-1}\, \re^{-{\beta\over 2g_s} V(x)}.
\end{equation}
The total partition function will be written as a trans-series expansion
\be
Z_{\beta} =Z_{\beta}^{(0)} + Z_{\beta}^{(1)} +\cdots
\ee
since, as we will see in a moment, $Z_{\beta}^{(1)}$ is exponentially 
small, and it is proportional to the small parameter $\re^{-\CA /g_s}$ 
(where $\CA$ will be calculated shortly). It follows that the trans-series 
expansion of the free energy will be given by 
\be
F=F^{(0)} + F^{(1)} +\cdots
\ee
where 
\begin{equation}\lbl{chempot}
 F^{(1)} = {Z_\beta^{(1)} (N)\over Z_\beta^{(0)}(N)} = 
{N \over (2 \pi N)^{\beta}}\, { Z_{\beta}^{(0)}(N-1) \over 
Z_{\beta}^{(0)}(N)} \int_{x\in \CI} \rd x\, f(x).
\end{equation}
The calculation of this quantity is very similar to the one performed in 
\cite{MSW}, and we will just present the main intermediate steps. The 
result involves 
the connected averages of the matrix model, 
\be\lbl{scor}
W_h (p_1, \ldots, p_h) =\beta^{h-1} \left\langle \tr\, 
{1\over p_1-\Lambda_N} \cdots \tr\, {1\over p_h-\Lambda_N} 
\right\rangle_{(\mathrm{c})}, 
\ee
where the subscript $(\mathrm{c})$ stands for connected. These averages 
have an asymptotic $g_s$ expansion of the form
\be
W_h (p_1, \ldots, p_h) =  \sum_{r=0}^{\infty} g_s^{2r+h-2} W_{r,h} 
(p_1, \ldots, p_h;t),
\ee
where $r$ runs over non-negative integers and half-integers. 
We will also need the integrated version of these averages, 
\be\lbl{theas}
A_{r,h} (x;t) = \left. \int^{x_1} \rd p_1 \cdots \int^{x_h} \rd p_h\, 
W_{r,h} (p_1,\cdots, p_h) \right|_{x_1=\cdots =x_h=x}, \ee
where the integration constant can be fixed by imposing appropriate 
boundary conditions at $x \rightarrow \infty$ (see \cite{MSW} for details). 
Another quantity that enters the computation is the {\it effective potential}
\be
V_{\rm eff}(x;t)=V(x) - 2 A_{0,1}(x;t)
\ee
which satisfies
\be
\lbl{derveff}
{\rd V_{\rm eff}(x;t) \over \rd x}=y(x).
\ee
We can now compute the integral over $f(x)$ at leading order in $g_s$ in 
terms of these quantities. It is easy to see that, due to (\ref{derveff}), 
the non-trivial saddle-points 
are given by the zeros of the moment function $M(x)$. Let $x_0$ be such a 
zero. We obtain 
\be
\int_{x\in \CI} \rd x\, f(x) =  \sqrt{\frac{2 \pi g_s}{\beta 
V''_{\mathrm{eff}} (x_0)}}\, \exp \left( - \frac{\beta}{g_s} 
V_{\mathrm{eff}} (x_0) + \Phi(x_0) \right) \left( 1 +\CO(g_s) \right), 
\ee
where
\be
\Phi(x)= \beta (2 A_{0,2}(x;t) + \partial_t V_{\rm eff}(x;t)) + A_{1/2,1}(x;t)
\ee
The quotient appearing in (\ref{chempot}) can be computed as
\be { Z_{\beta}^{(0)}(N-1) \over Z_{\beta}^{(0)}(N)} ={\Gamma(1+\beta) 
\over \beta^{\beta} (4\pi^2t)^{1-\beta\over 2}}
\exp \Bigl[ -{\beta \over g_s}  \partial_t F_0 +{\beta \over 2}
\partial_t^2 F_0 - \partial_t F_{1/2}  +\CO(g_s) \Bigr]. 
\ee
Putting everything together, we obtain the contribution of $x_0$ to 
$F^{(1)}$ at leading order in the $g_s$ expansion, 
\be
\lbl{fxo}
F^{(1)}_{x_0} ={N \over (2 \pi N)^{\beta}} {\Gamma(1+\beta) \over 
\beta^{\beta} (4\pi^2t)^{1-\beta\over 2}} 
\sqrt{\frac{2 \pi g_s}{\beta V''_{\mathrm{eff}} (x_0)}}\, 
\exp \left( - \frac{\beta}{g_s} \CA \right) 
\exp\Bigl[ \Phi(x_0)+{\beta \over 2}\partial_t^2 F_0 - 
\partial_t F_{1/2} \Bigr]
\ee
where \cite{MSW}
\be
\CA=V_{\mathrm{eff}} (x_0) + \partial_t F_0= \int_b^{x_0} y(x) \rd x
\ee
is the instanton action of the matrix model corresponding to the nontrivial 
saddle-point at $x_0$. Given a matrix model potential, the leading 
contribution to $F^{(1)}$ is given by the sum of the contributions of 
the saddle-points with 
lowest instanton action (in absolute value).

All the quantities appearing in (\ref{fxo}) which are needed when 
$\beta=1$ have been 
already computed explicitly in \cite{MSW}. Using the results of \cite{CE} 
we can also compute all the quantities needed for $\beta\not=1$, in terms 
of data of the spectral curve 
(\ref{scurve}). Since $M(z)$ is a polynomial, it can be written as 
\be
M(z)=c\prod_{i=1}^{d-2} (z-z_i).
\ee
We find for example
\be
\label{woh}
W_{1/2,1}(p)=(1-\beta^{-1}) \biggl\{ {d-1\over 2{\sqrt{\sigma(p)}}} 
-{1\over 4} {2p-a-b\over (p-a)(p-b)}-{1\over 2 {\sqrt{\sigma(p)}}}
\sum_{i=1}^{d-2} \biggl( {{\sqrt{\sigma(p)}} -{\sqrt{\sigma(z_i)}} 
\over p-z_i} \biggr)\biggr\}, 
\ee
and
\be
\label{foh}
\partial_t F_{1/2} =-(1-\beta) \biggl\{ {1\over 2} \log \biggl[ {1\over t} 
\Bigl( {b-a\over 4 }\Bigr)^2 \biggr] + \log c + \sum_{i=1}^{d-2} \log 
\biggl[ {1\over 2} \Bigl(z_i -{a+b\over 2}  + {\sqrt{\sigma(z_i)}}\Bigr) 
\biggr] \biggr\}.
\ee
In the expression for $\partial_t F_{1/2}$ we have subtracted $\partial_t 
F^G_{1/2}$, which is the $r=1/2$ free energy of the Gaussian model. 

Let us specialize these results for the quartic matrix model considered in 
\cite{BN,HM}. From (\ref{mquartic}) we see that $M(x)$ has two zeros at 
$\pm x_0$, 
\be
\lbl{qzero}
x_0^2=-{1+ 8 \lambda \a^2 \over 4 \lambda}.
\ee
Setting $\beta=1/2$ one finds by integration of (\ref{woh}) that 
\be
\ba
\label{aoh}
A_{1/2,1}(p)&={1\over 4} \log (p^2-4 \a^2) -{3\over 2} \log \Bigl[  p 
+  {\sqrt{p^2-4 \a^2}}\Bigr] \\
&+{1\over 2} \log\biggl[ 4\a^2 (x_0^2 +p^2) -2 x_0^2 p^2  -2 x_0^2 p^2 
{\sqrt{(1-4\a^2/p^2)(1- 4 \a^2 /x_0^2)}}\biggr] \\
&+{3\over 2} \log 2 -{1\over 2} \log \Bigl[ 4 \a^2 -2 x_0^2 -2 x_0^2 
{\sqrt{(1- 4 \a^2 /x_0^2)}} \Bigr].
\ea
\ee
and from (\ref{foh}) that
\be
\partial_t F_{1/2}=-{1\over 4} \log {\a^2 \over t} -\log \biggl[ {1\over 2} 
\Bigl(
{\sqrt{1+ 8 \lambda \a^2}} +{\sqrt{1+ 24 \lambda \a^2}}\Bigr)\biggr].
\ee

Before proceeding with the calculation of (\ref{fxo}), we present a test of 
these expressions. Based on 
general arguments relating matrix integrals and enumeration of maps 
\cite{E2,BDG} 
it follows that 
\be
\langle \tr \Lambda_N^4\rangle_{\BR \BP^2} ={\rm Res}_{p=0} \, 
p^4 W_{1/2,1}(p) 
\ee
is a generating functional for the number of rooted quadrangulations 
$c_n$ of the projective plane with 
$n$ vertices,
\be
\langle \tr \Lambda_N^4\rangle_{\BR \BP^2} =t^2 \sum_{n=1}^{\infty} 
c_n (-4 \lambda t)^{n-1}. 
\ee
This generating functional can be explicitly obtained from (\ref{aoh}) in 
terms of $\alpha$ and $x_0$,
\be
\langle \tr \Lambda_N^4\rangle_{\BR \BP^2}=x_0^4 - 
\alpha^4 -x_0^2 (x_0^2 + 2 \alpha^2) {\sqrt{1-{4 \alpha^2 \over x_0^2}}}. 
\ee
One finds for the $c_n$ sequence the values
\be
5, \, \, 38, \, \, 331, \, \, 3098, \, \, 30330, \, \, 306276, \, \,  
3163737, \, \, \cdots
\ee
The first ones can be tested with the results of \cite{A+}. It should be 
also possible to obtain the generating functional for these numbers from 
the general combinatorial results of \cite{G3}. 

The zeros (\ref{qzero}) of $M(x)$ give two saddle-points with the same 
value of $\CA$ and the same $F^{(1)}_{\pm x_0}$, therefore they both 
contribute to the instanton amplitude. In order to make contact with the 
trans-series solution of the differential equation, we must analyze the 
behavior of (\ref{fxo}) near the singular point (\ref{singularity}) 
(we set again $t=1$). 
Using the variable $z$ introduced in (\ref{eq.yz}) and taking into account 
the factor of $2$ relating $F$ and $\CF$ in (\ref{gf}), one obtains
\be
\ba
\quad \CF^{(1)}_{\beta=1}(z)&={\ri \over 8\cdot 3^{3\over 4} 
\pi^{1\over 2}} z^{-5/8} \exp\Bigl( -{8 {\sqrt{3}} \over 5} z^{5/4}\Bigr) 
+\cdots\\
\quad \CF^{(1)}_{\beta={1\over 2}}(z)&={\ri \over {\sqrt{2}}} 
\exp\Bigl( -{4 {\sqrt{3}} \over 5} z^{5/4}\Bigr) +\cdots.
\ea
\ee
From here, and taking into account the relations between $\CF(z)$ and 
$u(z)$, $v(z)$, we find
\be
u_1(z)=-(\CF^{(1)}_{\beta=1})''(z)= -\ri {3^{1\over 4} 
\over 2{\sqrt{\pi}}} z^{-5/8} \exp\Bigl( -{8 {\sqrt{3}} \over 5} 
z^{5/4}\Bigr)\bigl(1 +\CO(z^{-5/4})\bigr)
\ee
and 
\be
v_1(z)=-2 (\CF^{(1)}_{\beta={1\over 2}})'(z)=\ri {\sqrt{6}} \, z^{1\over 4} 
\exp\Bigl(-{4 {\sqrt{3}} \over 5} z^{5/4}\Bigr)\bigl(1 +\CO(z^{-5/4})\bigr)
\ee
We now recall that this computation computes the discontinuity across the 
Stokes line, therefore the overall constants appearing in these 
two trans-series solutions are the Stokes constants $S$, $S'$ that we 
introduced before. Let us end this section with a problem.

\begin{problem}
\lbl{prob.5}
Compute rigorously the $k$-instanton expansion of a matrix model (with
polynomial potential) using the Riemann-Hilbert method or the method
of Borel transforms.
\end{problem}

\section{Numerics}
\lbl{sec.numerics}
We now give numerical evidence for Conjecture \ref{conj.2}. 
To do this, we study the sequence
\be
s_n= {2 \pi (A/2)^n \over \Gamma(n)} v_n
\ee
which according to \ref{thm.1} has the following asymptotic behavior
\be
\label{sequ}
s_n \sim \sum_{k=0}^{\infty} {a_k \over n^k}, \qquad a_0 =-\ri S', 
\quad a_1 =-\frac{\ri}{2} S' A \nu_1, \cdots 
\ee
In order to test Theorem \ref{thm.1} and Conjecture \ref{conj.2}, it is 
useful to have a precise numerical method to determine the numbers $a_k$. 
The method of {\em Richardson transforms} 
(see for example \cite[p.375]{BS} for an 
exposition) is very well-suited for sequences of the type (\ref{sequ}) 
and it is easy to implement. Given a sequence of the form (\ref{sequ}), its 
$N$-th Richardson transform is defined by 
\be
s^{(N)}_n=\sum_{k=0}^N{s_{n+k} (g+k)^N(-1)^{k+N}\over k!(N-k)!}.
\ee
The effect of this transformation is to remove subleading tails in 
(\ref{sequ}). The values $s^{(N)}_n$ give numerical approximations to $a_0$, 
and these 
approximations become better as $N$, $n$ increase. As an example of this 
procedure, we plot the values of the sequences $S^{(N)}_n$, for $N=0,1,5$ and 
$n=1, \cdots, 250$. The bottom curve corresponds to $N=0$, the top curve 
corresponds to $N=1$, and the intermediate curve is $N=5$. 
\vskip .5cm
$$
\psdraw{unorquot}{3in}
$$
\vskip .5cm
We can see that the Richardson transforms provide a very fast convergence 
to the expected value 
\be
{\sqrt{6}}=2.44948974278317809819728407471....
\ee
Numerically, we find that $n=250$ and $N=20,30$ provide an approximation to 
the expected value with $28$ (respectively, $30$) significant digits,
\be
\ba
s^{(20)}_{250}&= 2.44948974278317809819728407459..., \\
s^{(30)}_{250}&= 2.44948974278317809819728407471...
\ea
\ee
We can verify in a similar way the value of $a_1$ by considering the 
auxiliary sequence
\be
r_n = n\Bigl( \ri {s_n\over  S'}-1\Bigr)
\ee
with the asymptotic behavior
\be
r_n \sim \frac{1}{2} \nu_1 A+\CO(1/n), \qquad n\rightarrow \infty. 
\ee
We plot the values of the sequences $r^{(N)}_n$, for $N=0,1,5$ and 
$n=1, \cdots, 250$.
\vskip .5cm
$$
\psdraw{firstcorr}{3in}
$$
Like before, we have very fast convergence to the expected value
\be
 \frac{1}{2}\nu_1 A=-{1\over 5}. 
 \ee
One finds numerically
\be
\ba
r^{(20)}_{250}&=-0.200000000000000000000000001520...,\\
r^{(30)}_{250}&=-0.200000000000000000000000000002....
\ea
\ee
\begin{remark} 
\lbl{rem.secondS}
We can also study the asymptotics of the coefficients $v_{n,k}$ appearing in 
the trans-series solution (\ref{eq.transv}). This asymptotics is governed by 
Equation \eqref{eq.asall}, where $\lambda=A/2$, $\beta=0$, and $S_1=S'$. Based on our 
numerical results, we conjecture that
\be
S_{-1}=-\ri {{\sqrt{6}}\over 12}. 
\ee
\end{remark}

Finally, we end with the following expectation of Conjecture \ref{conj.1}:
$$
p_{\frac{41}{2}}=\frac{
1238878081129358302459331398309144842472024202171957968278854904568087
551305256373}{10986030082548950321157435333449889551411576832 \sqrt{6}
    \Gamma\left(\frac{199}{4}\right)}.
$$

\ifx\undefined\bysame
        \newcommand{\bysame}{\leavevmode\hbox
to3em{\hrulefill}\,}
\fi

\end{document}